\let\tldocenglish=1  
\theoremstyle{definition}
\newtheorem{definition}{Definition}[section]
\theoremstyle{remark} \theoremstyle{theorem}
\newtheorem{remark}{Remark}[section]
\newtheorem{theorem}{Theorem}[section]
\newtheorem{lemma}{Lemma}[section]
\newtheorem{corollary}{Corollary}[section]
\newtheorem{example}{Example}[section]
\newtheorem{proposition}{Proposition}[section]
\title{Reconstruction of the core convex topology and its
applications in vector optimization and convex analysis}
\author{Ashkan Mohammadi\\[3mm]
Wayne State University,
Detroit, MI 48202;\\[1mm]
\url{ashkan.mohammadi@wayne.edu}\\[3mm] 
$\&$\\[3mm]
Majid Soleimani-damaneh\\[3mm]
University of Tehran, Tehran, Iran;\\[1mm]
 \url{soleimani@khayam.ut.ac.ir}}
\date{April 2017}
\begin{document}
\maketitle

\begin{abstract}
In this paper, the core convex topology on a real vector space $X$, which is constructed just by $X$ operators, is investigated. This topology, denoted by $\tau_c$, is the strongest topology which makes $X$ into a locally convex space. It is shown that some algebraic notions $(closure ~ and  ~ interior)$ existing in the literature come from this topology. In fact, it is  proved that algebraic interior and vectorial closure notions, considered in the literature as replacements of topological interior and topological closure, respectively, in vector spaces not necessarily equipped with a topology, are actually nothing else than the interior and closure with the respect to the core convex topology.
We reconstruct the core convex topology using an appropriate topological basis which enables us to characterize its open sets.

Furthermore, it is proved that $(X,\tau_c)$ is not metrizable when X is infinite-dimensional,
and also it enjoys the Hine-Borel property. Using these properties, $\tau_c$-compact sets are
characterized and a characterization of finite-dimensionality is provided. Finally, it is shown that the properties of the core convex topology lead to directly extending various important results in convex analysis and vector optimization from topological vector spaces to real vector spaces.\vspace{3mm}\\
\textbf{\textit{Keywords}}: \textit{Core convex topology, Functional Analysis, Vector optimization, Convex Analysis.}
\end{abstract}

\begin{multicols}{2}
\tableofcontents
\end{multicols}

\section{Introduction}
Convex Analysis and Vector Optimization under real vector
spaces, without any topology, have been studied by various
scholars in recent years
\cite{ada-2,ada-3,ada-4,bot-1,hol,jah,New2,kiy,zho1,zho}. Studying
these problems opens new connections between Optimization,
Functional Analysis, and Convex analysis. Since (relative)
interior and closure notions play important roles in many convex
analysis and optimization problems \cite{bao,jah,luc}, due to the
absence of topology, we have to use some algebraic concepts. To
this end, the concepts of \textit{algebraic (relative) interior}
and \textit{vectorial closure} have been investigated in the
literature, and many results have been provided invoking these
algebraic concepts; see e.g.
\cite{ada-2,ada-3,ada-4,fre,hol,jah,kiy,pen,pop,zho1,zho} and the
references therein. The main aim of this paper is to unifying vector optimization in real vector spaces with vector optimization in topological vector  spaces.

In this paper, core convex topology (see \cite{New1,New2}) on an arbitrary real vector
space, $X$, is dealt with. Core convex topology, denoted by $\tau_c$, is
the strongest topology which makes a real vector space into a
locally convex space (see \cite{New1,New2}). The topological dual of $X$ under
$\tau_c$ coincides with its algebraic dual \cite{New1,New2}. It is quite well known that when a locally convex space is given by a family of seminorms, the locally convex topology is deduced in a standard way and vice versa. In this paper, $\tau_c$ is reconstructed by a topological basis. It is known that algebraic
(relative) interior of a convex set is a topological notion
which can be derived from core convex topology \cite{New1,New2}.
We provide a formula for $\tau_c$-interior of an arbitrary (nonconvex) set with respect to the algebraic interior of its convex components. Furthermore, we show that vectorial closure is also a topological notion coming from core convex topology (under mild assumptions). According
to these facts, various important results, in convex analysis and
vector optimization can be extended easily from topological vector
spaces (TVSs) to real vector spaces. Some such results are
addressed in this paper. After providing some basic results about open sets in $\tau_c$, it is proved that, $X$ is not metrizable under $\tau_c$ topology if it is infinite-dimensional. Also, it is shown that
$(X,\tau_c)$ enjoys the Hine-Borel property.  A characterization of open sets in terms of there convex components is given.
Moreover, $\tau_c$-convergence as well as $\tau_c$-compactness are
characterized.

The rest of the paper unfolds as follows. Section 2 contains some
preliminaries and Section 3 is devoted to the core convex topology.
Section 4 concludes the paper by addressing some results existing
in vector optimization and convex analysis literature which can be
extended from TVSs to real vector spaces, utilizing the
results given in the present paper.

\section{Preliminaries}
Throughout this paper, $X$ is a real vector space, $A$ is a
subset of $X$, and $K \subseteq X$ is a nontrivial nonempty
ordering convex cone. $K$ is called pointed if $K \cap(-K) =
\{0\}$. $cone(A)$, $conv(A)$,
 and $aff(A)$ denote the cone generated by $A$, the convex hull of $A$, and
the affine hull of $A$, respectively.

For two sets $A,B\subseteq X$ and a vector $\bar{a}\in X$, we use
the following notations:
$$\begin{array}{c}
A\pm B:=\{a\pm b : ~ a\in A,~b\in B\},\vspace{2mm}\\
\bar{a}\pm A:=\{\bar{a}\pm a:~a\in A\},\vspace{2mm}\\
A \backslash B:=\{a\in A:~a\notin B\}.
\end{array}$$

$P(X)$ is the set of all subsets of $X$ and for $\Gamma \subseteq
P(X)$,
$$ \cup \Gamma := \{ x \in X: ~ \exists A \in \Gamma; ~ x \in A \} $$

The algebraic interior of $A\subseteq X$, denoted by $cor(A)$, and
the relative algebraic interior of $A$, denoted by $icr(A)$, are
defined as follows \cite{hol,New2}:
$$\begin{array}{c}
cor(A) := \{x \in A : ~\forall x^{'} \in X \;,\; \exists \lambda^{'} >0 ; ~~\forall \lambda \in [0,\lambda^{'}],~~ x+\lambda x^{'} \in A
 \},\vspace{2mm}\\
icr(A):= \{x \in A : ~\forall x^{'} \in L(A) \;,\; \exists \lambda^{'} >0; ~~\forall \lambda \in [0,\lambda^{'}],~~ x+\lambda x^{'} \in A
 \},
 \end{array}$$
where $L(A)=span(A-A)$ is the linear hull of $A-A$. When $cor(A)
\ne \emptyset$ we say that $A$ is solid; and we say that $A $ is
relatively solid if $icr(A) \ne\emptyset$. The set $A$ is called
algebraic open if $cor(A)=A$. The set of all elements of $X$ which do not belong to $cor(A)$ and
$cor(X  \setminus A)$ is called the algebraic boundary of $A$. The set $A$ is called algebraically bounded, if for every $ x \in A$ and every $ y \in X $ there is a $ \lambda  >  0 $ such that
$$     x + t y \notin A  \quad ~ \forall t \in [ \lambda , \infty ).        $$

If $A$ is convex, then there is a simple characterization of  $icr(A)$ as follows: $a\in icr(A)$ if and only if for each $x\in A$ there exists $\delta>0$ such that $a+\lambda(a-x)\in A,$ for all $ \lambda \in [0, \delta ].$

\begin{lemma}\label{C1}
Let $\{ e_{i} \}_{ i \in I}$ be a vector basis for $X$, and $A\subseteq X$ be nonempty and convex. $a\in cor(A)$ if and only if for each $ i \in I $ there exists scalar $\delta_{i}  > 0$ such that $a\pm \delta_{i} e_{i} \in A.$
\end{lemma}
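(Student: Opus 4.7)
The forward implication is immediate from the definition of $cor(A)$: for each basis vector $e_i$, applying the defining condition with $x' = e_i$ and with $x' = -e_i$ produces scalars $\lambda'_i, \lambda''_i > 0$, and $\delta_i := \min\{\lambda'_i, \lambda''_i\}$ delivers $a \pm \delta_i e_i \in A$.

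For the reverse direction, I would fix an arbitrary $x' \in X$ and produce a $\lambda' > 0$ such that $a + \lambda x' \in A$ for all $\lambda \in [0,\lambda']$. The case $x' = 0$ is trivial, so otherwise I use the basis to write $x' = \sum_{j=1}^n \alpha_j e_{i_j}$ with all $\alpha_j \neq 0$ (discarding zero coefficients). The hypothesis supplies points $b_j := a + \mathrm{sgn}(\alpha_j)\,\delta_{i_j} e_{i_j} \in A$ for $j = 1, \ldots, n$. Setting $t_j := \lambda |\alpha_j|/\delta_{i_j}$, a direct algebraic check yields
\[
\Bigl(1 - \sum_{j=1}^n t_j\Bigr) a \;+\; \sum_{j=1}^n t_j\, b_j \;=\; a + \sum_{j=1}^n t_j\,\mathrm{sgn}(\alpha_j)\,\delta_{i_j}\,e_{i_j} \;=\; a + \lambda \sum_{j=1}^n \alpha_j e_{i_j} \;=\; a + \lambda x'.
\]
Choosing $\lambda' := \bigl(\sum_{j=1}^n |\alpha_j|/\delta_{i_j}\bigr)^{-1}$ ensures that for every $\lambda \in [0,\lambda']$ the weights $\bigl(1-\sum_j t_j,\, t_1, \ldots, t_n\bigr)$ are nonnegative and sum to one, so convexity of $A$ gives $a + \lambda x' \in A$, hence $a \in cor(A)$.

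There is no serious obstacle here: the argument rests on the fact that any direction $x'$ lies in the finite-dimensional subspace spanned by finitely many basis vectors, so the (possibly uncountable) index set $I$ causes no complication. The only genuine care is bookkeeping of signs, which is handled by the $\mathrm{sgn}(\alpha_j)$ factors that reconcile the ``one-sided'' data $b_j \in A$ with the two-sided hypothesis $a \pm \delta_i e_i \in A$.
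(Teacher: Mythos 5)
Your proof is correct and follows essentially the same route as the paper: reduce an arbitrary direction to the finitely many basis vectors in its expansion and use convexity of $A$ on the points $a\pm\delta_i e_i$; your single explicit convex combination with the $\mathrm{sgn}(\alpha_j)$ bookkeeping is simply a tidier version of the paper's two-stage averaging along the segments $[a,a+\delta_j e_j]$ and $[a-\delta_j e_j,a]$. The only unstated point, shared with the paper's own argument, is that $a\in A$ itself (needed both to form your convex combination and for membership in $cor(A)$), which follows immediately since $a$ is the midpoint of $a+\delta_i e_i$ and $a-\delta_i e_i$.
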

 \begin{proof}
Assume that for each $ i \in I $ there exists scalar $\delta_{i}  > 0$ such that $a\pm \delta_{i} e_{i} \in A.$ Let $ d \in X$. There exist a finite set $J\subseteq I$ and  positive scalars $\lambda_{j}, \mu_{j}, \delta_{j},~j\in J$ such that
$$ d= \sum_{j \in J} \lambda_{j} e_{j} - \sum_{j \in J} \mu_{j} e_{j}  ,~~ a \pm \delta_{j} e_{j} \in A,  ~ \forall j \in J.$$
 Let $m:=Card(J)$ and  $\delta  >  0$. Considering $\delta\in (0,\min\{\frac{1}{2m},\frac{\delta_j}{2m\lambda_j}\})$, we have
$$ a + 2m \delta \lambda_{j} e_{j}  \in [ a ,  a+ \delta_{j} e_{j} ], ~  a - 2m \delta \mu_{j} e_{j}  \in [ a- \mu_{j} e_{j} ,  a], ~ \forall j \in J,$$
where $[x,y]$ stands for the line segment joining $x$,$y$. Since $ A $ is convex,
$$    a + 2m \delta \lambda_{j} e_{j}  \in A, ~    a - 2m \delta \mu_{j} e_{j}  \in A, ~ \forall j \in J,$$
and then, due to the convexity of $A$ again,
$$ \frac{a}{2}  + \sum_{j \in J} \delta \lambda_{j} e_{j}  \in \frac{A}{2},~~~ ~  \frac{a}{2}  - \sum_{j \in J}  \delta \mu_{j} e_{j}  \in  \frac{A}{2}.$$
This implies
$$ a +  \sum_{j \in J} \delta \lambda_{j} e_{j} - \sum_{j \in J} \delta \mu_{j} e_{j} \in A,$$
which means $ a + \delta d \in A.$ Furthermore, the convexity of $A$ guarantees that $ a + \lambda d \in A $ for all  $\lambda \in [0, \delta].$ Thus $a \in cor(A).$ The converse is obvious.
\end{proof}

Some basic properties of the algebraic interior are summarized in
the following lemmas. The proof of these lemmas can be found in
the literature; see e.g. \cite{ada-2,ada-3,New1,hol,New2}.

\begin{lemma} \label{p}
Let $A$ be a nonempty set in real vector space $X$. Then
the following propositions hold true:\\
1. If $A$ is convex, then $cor(cor(A))=cor(A)$,\\
2. $cor(A\cap B)=cor(A)\cap cor(B)$,\\
3. $cor(x+A)=x+cor(A)$ for each $x\in X,$ \\
4. $cor(\alpha A)=\alpha cor(A),$ for each $\alpha\in \mathbb{R} \setminus \{0\}  ,$\\
5. If $0\in cor(A)$, then $A$ is absorbing (i.e. $cone(A)=X$).
\end{lemma}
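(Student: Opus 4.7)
My plan is to verify the five items by unwinding the definition of the algebraic interior, with the single genuinely convex-analytic ingredient reserved for item 1. Items 3 and 4 are bare changes of variable: writing $a=x+b$ for item 3, respectively $\mu=\lambda/\alpha$ (with $y\mapsto y/\alpha$ when $\alpha<0$) for item 4, the defining quantifier phrase transports verbatim from one side of each identity to the other. Item 5 is a single application of the definition at $a=0$ in direction $x\in X$: the resulting $\lambda'>0$ satisfies $\lambda' x\in A$, hence $x=(1/\lambda')\cdot\lambda' x\in cone(A)$, for arbitrary $x$.

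For item 2, both inclusions fall out symmetrically. Since $A\cap B\subseteq A,B$, any witness $\lambda'$ produced from membership in $cor(A\cap B)$ serves as a witness for $A$ and $B$ separately, giving $cor(A\cap B)\subseteq cor(A)\cap cor(B)$. Conversely, if $a\in cor(A)\cap cor(B)$ then already $a\in A\cap B$, and for each direction $y$ the two witnesses $\lambda_A,\lambda_B>0$ combine via $\lambda'=\min\{\lambda_A,\lambda_B\}$ to produce the required witness in $A\cap B$. No convexity is used.

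Item 1 is the substantive part. The inclusion $cor(cor(A))\subseteq cor(A)$ is automatic from $cor(A)\subseteq A$. For the reverse, the key is a \emph{line segment principle}: if $A$ is convex, $a\in cor(A)$, and $b\in A$, then $(1-t)a+tb\in cor(A)$ for every $t\in[0,1)$. To prove this, fix a direction $y\in X$, take the witness $\mu'>0$ ensuring $a+\mu y\in A$ for $\mu\in[0,\mu']$, and observe that for $\mu\in[0,(1-t)\mu']$,
\[
\bigl((1-t)a+tb\bigr)+\mu y \;=\; (1-t)\Bigl(a+\tfrac{\mu}{1-t}\,y\Bigr)+t\,b
\]
lies in $A$ by convexity. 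Granted the principle, item 1 follows at once: for $a\in cor(A)$ and $x\in X$, a $cor$-witness $\lambda_0>0$ gives $b:=a+\lambda_0 x\in A$, and for $\lambda\in[0,\lambda_0)$ the point $a+\lambda x=(1-\lambda/\lambda_0)a+(\lambda/\lambda_0)b$ lies in $cor(A)$ by the principle, placing $a$ in $cor(cor(A))$. The only real obstacle is organizational: without extracting the line segment principle, a direct attempt at item 1 entangles the ``for every direction $y$'' quantifier now required at the perturbed point $a+\lambda x$, so isolating the principle as a separate step is the clean route.
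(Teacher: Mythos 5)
Your proposal is correct, but note that the paper never proves this lemma at all: it simply states that the proofs ``can be found in the literature'' (Holmes, Khan--Tammer--Zalinescu, etc.), so there is no in-paper argument to compare against. What you supply is the standard self-contained route, and the work is distributed sensibly: items 2--5 really are pure quantifier bookkeeping (your $\min\{\lambda_A,\lambda_B\}$ witness for item 2 and the direction-rescaling $y\mapsto y/\alpha$ for item 4 are exactly what is needed, the latter being the right fix for $\alpha<0$ where rescaling the parameter alone would break the sign of $\lambda$), and isolating the line segment principle ($a\in cor(A)$, $b\in A$, $A$ convex $\Rightarrow (1-t)a+tb\in cor(A)$ for $t\in[0,1)$) is precisely the classical way to get item 1; it is also the engine behind the paper's later results such as Theorem \ref{int} and the segment argument cited from \cite{jah} in Theorem \ref{vcl}. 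One cosmetic point: the paper's definition of $cor$ demands membership on a closed interval $[0,\lambda']$, while your final step for item 1 produces $a+\lambda x\in cor(A)$ only for $\lambda\in[0,\lambda_0)$; this is closed by taking the witness $\lambda'=\lambda_0/2$ (or any $\lambda'<\lambda_0$), which is worth saying explicitly but is not a genuine gap.
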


\begin{lemma} \label{00} Let $K \subseteq X$ be a convex cone. Then
the following propositions hold true:\\
i. If $cor(K)\ne \emptyset$, then $cor(K)\cup \{0\}$ is a convex cone,\\
ii. $cor(K)+K = cor(K)$,\\
iii. If $K,C \subseteq X$ are convex and relatively solid, then $icr(K)+icr(C)=icr(K+C).$\\
iv. If $ f:X \longrightarrow \mathbb{R}$  is a convex (concave) function, then $f$ is $\tau_{c}-$continuous.
\end{lemma}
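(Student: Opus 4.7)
The plan is to reduce $\tau_c$-continuity of a convex $f$ at an arbitrary point $x_0\in X$ to the existence, for each $\varepsilon>0$, of a $\tau_c$-open set $W\ni x_0$ on which $|f-f(x_0)|<\varepsilon$; the concave case then follows instantly by applying the result to $-f$. The natural candidate is built from the sublevel set
$$U_\varepsilon:=\{x\in X:f(x)<f(x_0)+\varepsilon\},$$
which is convex because $f$ is convex.

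First I would verify that $x_0\in cor(U_\varepsilon)$. For an arbitrary direction $d\in X$ the restriction $t\mapsto f(x_0+td)$ is a convex real-valued function of one real variable; such functions are continuous on the interior of their (real) domain, so there exists $\lambda_d>0$ with $f(x_0+td)<f(x_0)+\varepsilon$ for every $t\in[0,\lambda_d]$. Since $d$ was arbitrary, this is precisely the algebraic-interior condition.

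Next comes the symmetrisation step, which I expect to be the crux. The one-sided bound $f<f(x_0)+\varepsilon$ only controls $f$ from above, so I would intersect $U_\varepsilon$ with its reflection through $x_0$ by setting $V:=U_\varepsilon\cap(2x_0-U_\varepsilon)$. For any $x\in V$, the identity $x_0=\tfrac{1}{2}x+\tfrac{1}{2}(2x_0-x)$ combined with the convexity of $f$ yields $f(x_0)\le\tfrac{1}{2}f(x)+\tfrac{1}{2}f(2x_0-x)$, and because each summand on the right is strictly smaller than $f(x_0)+\varepsilon$, one gets $f(x)>f(x_0)-\varepsilon$. Hence $V\subseteq\{x:|f(x)-f(x_0)|<\varepsilon\}$.

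Openness is then handed to us by Lemma~\ref{p}: parts~2--4 give
$$cor(V)=cor(U_\varepsilon)\cap\bigl(2x_0-cor(U_\varepsilon)\bigr),$$
which contains $x_0$ by the first step. Because the algebraic interior of a convex set coincides with its $\tau_c$-interior (the fact from \cite{New1,New2} recalled in the introduction), $cor(V)$ is a $\tau_c$-open neighbourhood of $x_0$ contained in the $\varepsilon$-tube around $f(x_0)$, which establishes the claim. The main obstacle throughout is the lower bound on $f$: in the absence of any ambient topology one cannot appeal directly to balanced or absolutely convex neighbourhoods, and the reflection-through-$x_0$ trick is exactly the device that substitutes for them.
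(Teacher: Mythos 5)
Your proposal proves only part (iv) of the lemma; parts (i), (ii) and (iii) are never addressed, so as a proof of the stated result it is incomplete. This matters because the lemma is a four-part statement and (iii) in particular --- the identity $icr(K)+icr(C)=icr(K+C)$ for convex, relatively solid sets --- is the least routine of the omitted claims and needs its own argument; (i) and (ii) are short but still have to be written down. Note also that the paper itself offers no proof of this lemma at all: it is listed among the preliminary facts whose proofs are deferred to the literature (e.g.\ \cite{ada-2,ada-3,New1,hol,New2}), so there is no in-paper argument to match; what you supplied is a genuine proof of one quarter of the statement, not an alternative route to the whole of it.

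For part (iv) your reasoning is correct and self-contained. The membership $x_0\in cor(U_\varepsilon)$ does follow from the continuity of the finite convex function $t\mapsto f(x_0+td)$ on $\mathbb{R}$; the reflection $V:=U_\varepsilon\cap(2x_0-U_\varepsilon)$ together with $f(x_0)\le \frac{1}{2}f(x)+\frac{1}{2}f(2x_0-x)$ gives the missing lower bound; and Lemma~\ref{p} (parts 2--4) plus the identification of $cor$ with $int_c$ on convex sets (Theorem~\ref{int}) make $cor(V)$ a $\tau_c$-open neighbourhood of $x_0$ contained in $\{x:|f(x)-f(x_0)|<\varepsilon\}$. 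Invoking Theorem~\ref{int} here is not circular, since its proof in the paper relies only on Lemmas~\ref{p} and \ref{000}; alternatively, you could observe directly that $cor(V)$ is convex and satisfies $cor(cor(V))=cor(V)$, hence lies in the basis $\mathfrak{B}$ of $\tau_c$. The concave case via $-f$ is fine. So the defect is not in the analysis you carried out, but in the three assertions you left unproved.
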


Although the (relative) algebraic interior is usually defined in
vector spaces without topology, in some cases it might be
useful under TVSs too. It is because the algebraic (relative)
interior can be nonempty while (relative) interior is empty. The
algebraic (relative) interior preserves most of the properties of
(relative) interior.

Let $Y$ be a real topological vector space (TVS) with topology
$\tau$. We denote this space by $(Y,\tau).$ The interior of
$A\subseteq Y$ with respect to topology $\tau$ is denoted by
$int_{\tau}(A)$. A vector $a\in A$ is called a relative interior
point of $A$ if there exists some open set $U$ such that $U\cap
aff(A)\subseteq A.$ The set of relative interior points of $A$ is
denoted by $ri_{\tau}(A).$

\begin{lemma} \label{000}
Let $(Y,\tau)$ be a real topological vector space (TVS) and
$A\subseteq Y$. Then $int_{\tau}(A)\subseteq cor(A)$. If
furthermore $A$ is convex and $int_{\tau}(A)\neq \emptyset$,
then $int_{\tau}(A)=cor(A)$.
\end{lemma}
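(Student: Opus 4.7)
The plan is to prove the two inclusions separately, with the first following from continuity of the vector space operations in a TVS and the second from the classical ``line segment principle'' for convex sets.

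For the inclusion $int_{\tau}(A)\subseteq cor(A)$, I would take $a\in int_{\tau}(A)$ and an arbitrary direction $x'\in X$. The continuity of scalar multiplication $\mathbb{R}\times Y\to Y$ at $(0,x')$, combined with the fact that $int_{\tau}(A)$ is an open neighborhood of $a$, gives a $\lambda'>0$ such that $a+\lambda x'\in int_{\tau}(A)\subseteq A$ for every $\lambda\in[0,\lambda']$. This is precisely the defining condition for $a\in cor(A)$, and it requires no convexity assumption.

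For the reverse inclusion under the hypotheses that $A$ is convex and $int_{\tau}(A)\neq\emptyset$, I would fix some $a_0\in int_{\tau}(A)$ and an arbitrary $a\in cor(A)$, and aim to show $a\in int_{\tau}(A)$. Applying the definition of $cor(A)$ to the direction $x':=a-a_0$ produces an $\varepsilon>0$ with $b:=a+\varepsilon(a-a_0)=(1+\varepsilon)a-\varepsilon a_0\in A$. A direct computation shows $a=\tfrac{\varepsilon}{1+\varepsilon}a_0+\tfrac{1}{1+\varepsilon}b$, so $a$ lies strictly between the interior point $a_0$ and the point $b\in A$. The standard line segment principle in a TVS (which follows by pushing the open neighborhood of $a_0$ along the homothety centered at $b$ with ratio $\tfrac{\varepsilon}{1+\varepsilon}\in(0,1)$, using continuity of the affine map $y\mapsto \tfrac{\varepsilon}{1+\varepsilon}y+\tfrac{1}{1+\varepsilon}b$ and the convexity of $A$) then places $a$ in $int_{\tau}(A)$.

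There is no real obstacle here: the argument is entirely standard. If anything, the step requiring the most care is the line segment principle in the second part, where one must verify that the image of an open neighborhood of $a_0$ under the above affine map lies in $A$; this uses convexity of $A$ crucially (to see that convex combinations of $b\in A$ and points of $int_{\tau}(A)\subseteq A$ remain in $A$) and this is exactly why the convexity hypothesis cannot be dropped in the second statement.
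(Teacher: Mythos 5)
Your proof is correct. The paper does not actually prove this lemma --- it is stated as a known fact, with the surrounding lemmas deferred to the literature --- and your argument is exactly the standard one: continuity of the map $\lambda\mapsto a+\lambda x'$ gives $int_{\tau}(A)\subseteq cor(A)$ without convexity, and the line segment principle applied to $a=\tfrac{\varepsilon}{1+\varepsilon}a_0+\tfrac{1}{1+\varepsilon}b$ gives the reverse inclusion. The only minor wording point is in the last step: to conclude that the image of the open set $int_{\tau}(A)$ under the homothety $y\mapsto \tfrac{\varepsilon}{1+\varepsilon}y+\tfrac{1}{1+\varepsilon}b$ is open you need that this affine map is a homeomorphism (equivalently, continuity of its inverse), not merely its own continuity; with that phrasing adjusted the argument is complete.
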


The algebraic dual of $X$ is denoted by $X^{'}$, and
$\langle.,.\rangle$ exhibits the duality pairing, i.e., for $l\in
X^{'}$ and $x\in X$ we have $\langle l,x\rangle:=l(x)$. The
nonnegative dual and the positive dual of $K$ are, respectively,
defined by
$$\begin{array}{c}
K^{+} := \{l \in X^{'}:~ \langle l,a\rangle \geq 0,~~ \forall a \in
K\},\vspace{2mm}\\
K^{+s} := \{l \in X^{'}:~ \langle l,a\rangle > 0,~~ \forall a
\in K \backslash\{0\}\}.
\end{array}$$

If $K$ is a convex cone with nonempty algebraic interior, then $
cor(K) = \{ x \in K: ~  \langle l,x\rangle > 0,~~ \forall l \in
K^{+} \setminus \{0\}  \}. $

The vectorial closure of $A$, which
is considered instead of closure in the absence of topology, is
defined by \cite{ada-2}
$$vcl(A):=\{b \in X : \exists x \in X \;;\; \forall \lambda^{'}
>0\;,\; \exists \lambda \in [0,\lambda^{'}]\;;\; b+\lambda x \in
A\}.$$

$A$ is called vectorially closed if $A=vcl(A)$. 

\section{Main results}

This section is devoted to constructing core convex topology
via a topological basis. Formerly, the core convex topology was constructed via a
family of separating semi-norms on $X$; see \cite{New2}. In this section, we are going to
construct core convex topology directly by characterizing
its open sets.
The first step in constructing a topology is defining its basis. The following
definition and two next lemmas concern this matter.

\begin{definition}\cite{mun}
Let $\digamma $ be a subset of $P(X)$, where $P(X)$ stands for
the power set of $X$. Then, $\digamma$ is called a topological
basis on $X$ if $X,\emptyset \in \digamma$ and moreover, the
intersection of each two members of $\digamma$ can be represented
as union of some members of $\digamma$.
\end{definition}

The following lemma shows how a topology is constructed from a
topological basis.

\begin{lemma} \label{topbasis}
If $\digamma$ is a topological basis on $X$, then the collection
of all possible unions of members of $\digamma$ is a topology on
$X$.
\end{lemma}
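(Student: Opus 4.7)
The goal is to verify that $\tau := \{\cup\Gamma : \Gamma\subseteq\digamma\}$ satisfies the three axioms of a topology on $X$: containment of $\emptyset$ and $X$, closure under arbitrary unions, and closure under finite intersections. My plan is to dispatch the first two axioms almost immediately from the definition and reserve the real work for the intersection axiom, which is where the defining property of a topological basis actually gets used.

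For the first axiom, I would note that since $X,\emptyset\in\digamma$ by hypothesis, both sets are trivially of the form $\cup\Gamma$ (take $\Gamma=\{X\}$ and $\Gamma=\{\emptyset\}$, respectively), so they lie in $\tau$. For the second axiom, given any collection $\{U_\alpha\}_{\alpha\in A}\subseteq\tau$, each $U_\alpha$ can be written as $\cup\Gamma_\alpha$ with $\Gamma_\alpha\subseteq\digamma$; then $\bigcup_{\alpha\in A}U_\alpha=\cup\bigl(\bigcup_{\alpha\in A}\Gamma_\alpha\bigr)$, which is itself a union of members of $\digamma$, hence in $\tau$. Both arguments are purely set-theoretic manipulations and do not invoke the hypothesis on pairwise intersections.

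The heart of the proof is showing closure under finite intersections. By an obvious induction it suffices to treat two sets. Given $U=\cup\Gamma_1$ and $V=\cup\Gamma_2$ with $\Gamma_1,\Gamma_2\subseteq\digamma$, I would use distributivity to write
\[
U\cap V=\bigcup_{B\in\Gamma_1,\;C\in\Gamma_2}(B\cap C).
\]
Now the basis hypothesis supplies, for each pair $(B,C)$, a subcollection $\Lambda_{B,C}\subseteq\digamma$ with $B\cap C=\cup\Lambda_{B,C}$. Substituting and collapsing the double union gives $U\cap V=\cup\bigl(\bigcup_{B,C}\Lambda_{B,C}\bigr)$, a union of members of $\digamma$, hence a member of $\tau$.

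There is no real obstacle here; the only point requiring care is the bookkeeping in the intersection step, making sure not to confuse a union over $\digamma$ with a union over subfamilies of $\digamma$. Once that is laid out cleanly, the three axioms follow and $\tau$ is a topology. The lemma then justifies the strategy of the next section, where $\tau_c$ will be produced by exhibiting an explicit basis and appealing to this result rather than going through seminorms as in \cite{New2}.
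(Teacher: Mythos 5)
Your proof is correct and is exactly the standard textbook argument (as in Munkres, which the paper cites for the definition); the paper itself states the lemma without proof, so there is nothing to contrast it with. All three axioms are handled properly, and in particular the distributivity-plus-basis-hypothesis step for $U\cap V$, together with the reduction to two sets by induction, is the right and complete way to verify closure under finite intersections.
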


Lemma \ref{basis} provides the basis of the topology which we are
looking for. The proof of this lemma is clear according to Lemma \ref{p}.

\begin{lemma}\label{basis}
The collection
$$\mathfrak{B}:=\{A\subseteq X~:~cor(A)=A,~conv(A)=A\}$$
is a topological basis on $X$.
\end{lemma}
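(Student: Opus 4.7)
The plan is to unpack the definition of topological basis and check its two requirements directly. The definition asks that $X$ and $\emptyset$ lie in $\mathfrak{B}$, and that the intersection of any two members of $\mathfrak{B}$ be expressible as a union of members of $\mathfrak{B}$. I expect essentially no obstacle here; the statement is really a direct bookkeeping check once Lemma \ref{p} is invoked.

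For the first requirement, $\emptyset$ satisfies $cor(\emptyset)=\emptyset$ and $conv(\emptyset)=\emptyset$ vacuously, while $X$ is convex and $cor(X)=X$ by the very definition of algebraic interior (for any $x,x'\in X$ and any $\lambda\geq 0$, $x+\lambda x'\in X$). So $\emptyset, X\in\mathfrak{B}$.

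For the second requirement, let $A,B\in\mathfrak{B}$. The key observation is that it suffices to show $A\cap B\in\mathfrak{B}$, because then $A\cap B$ is itself a (one-element) union of members of $\mathfrak{B}$. Convexity of $A\cap B$ is immediate from convexity of $A$ and of $B$, so $conv(A\cap B)=A\cap B$. For the algebraic-openness, I would apply part 2 of Lemma \ref{p}, which gives
\[
cor(A\cap B)=cor(A)\cap cor(B)=A\cap B,
\]
the last equality holding since $cor(A)=A$ and $cor(B)=B$ by assumption. Thus $A\cap B\in\mathfrak{B}$, completing the verification. No genuinely hard step arises; the whole argument is a direct consequence of the algebraic-interior calculus already established in Lemma \ref{p} together with the trivial fact that convexity is preserved under intersection.
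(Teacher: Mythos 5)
Your proof is correct and follows exactly the route the paper intends: the paper simply states that the lemma is ``clear according to Lemma \ref{p}'', and your argument (membership of $X$ and $\emptyset$, then $A\cap B\in\mathfrak{B}$ via $cor(A\cap B)=cor(A)\cap cor(B)$ and stability of convexity under intersection) is precisely the bookkeeping that remark leaves implicit. Nothing is missing.
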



Now, we denote the topology generated by
$$\mathfrak{B}:=\{A\subseteq X~:~cor(A)=A,~conv(A)=A\}$$
by $\tau_c$; more precisely
$$\tau_c:=\{\cup\Gamma\in P(X)~:~\Gamma \subseteq  \mathfrak{B}\}.$$

The following theorem shows that $\tau_c$ is the strongest topology which
makes $X$ into a locally convex TVS. This theorem has been proved in \cite{New2} using a family of semi-norms defined on $X$. Here, we provide a different proof.

\begin{theorem}\label{111}~\\
i. $(X,\tau_c)$ is a locally convex TVS;\\
ii. $\tau_c$ is the strongest topology which makes $X$ into a locally
convex space.
\end{theorem}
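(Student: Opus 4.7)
The plan is to verify (i) by separately checking that $\tau_c$ gives $X$ a TVS structure (continuous addition and scalar multiplication, plus Hausdorff separation) and that local convexity is immediate from the definition of $\mathfrak{B}$; and then to establish (ii) by showing any other locally convex TVS topology $\tau$ on $X$ is coarser than $\tau_c$, using Lemma~\ref{000} to identify $\tau$-open convex sets as members of $\mathfrak{B}$.

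For continuity of addition at a point $(x_0,y_0)$, I would take a basic open set $B \in \mathfrak{B}$ with $x_0+y_0 \in B$ and set $C := B - (x_0+y_0)$, which is algebraically open, convex, and contains $0$ by Lemma~\ref{p}(3). Then $\tfrac{1}{2}C \in \mathfrak{B}$ (using Lemma~\ref{p}(4)), and the convex identity $\tfrac{1}{2}C + \tfrac{1}{2}C = C$ shows that $V := x_0+\tfrac{1}{2}C$ and $W := y_0+\tfrac{1}{2}C$ satisfy $V+W \subseteq B$. Local convexity of $\tau_c$ is automatic because every member of $\mathfrak{B}$ is convex.

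The hardest step will be the continuity of scalar multiplication, since the basic neighborhoods need not be balanced. To handle this I would first replace the $C$ above by its symmetric refinement $D := C \cap (-C)$, still algebraically open by Lemma~\ref{p}(2) and convex; a convex symmetric set containing $0$ automatically satisfies $\alpha D \subseteq D$ for every scalar with $|\alpha|\leq 1$. Since $D$ is absorbing (Lemma~\ref{p}(5)), there is $\delta>0$ with $tx_0\in\tfrac{1}{2}D$ for all $|t|\leq\delta$. Setting $M:=|\lambda_0|+\delta$ and $V:=x_0+\tfrac{1}{2M}D\in\mathfrak{B}$, the decomposition $\lambda x-\lambda_0 x_0=\lambda(x-x_0)+(\lambda-\lambda_0)x_0$ places both summands in $\tfrac{1}{2}D$ (the first by the balanced property and the bound $|\lambda|\leq M$, the second by the choice of $\delta$), so convexity of $D$ gives $\lambda x - \lambda_0 x_0 \in D$, hence $\lambda x\in B$. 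For Hausdorffness, given $0\neq z\in X$ I would extend $\{z\}$ to a Hamel basis $\{e_i\}_{i\in I}$ with $e_{i_0}=z$ and consider the slab $B:=\{\sum_i\alpha_i e_i:|\alpha_{i_0}|<1/2\}$; Lemma~\ref{C1} shows $B\in\mathfrak{B}$, and $B$ together with its translate $z+B$ furnishes disjoint open neighborhoods of $0$ and $z$.

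For (ii), let $\tau$ be any topology making $(X,\tau)$ a locally convex TVS and fix $U\in\tau$. For each $x\in U$, local convexity of $\tau$ furnishes a $\tau$-open convex set $V$ with $x\in V\subseteq U$. Since $V=int_\tau(V)$ is nonempty, convex, and $\tau$-open, Lemma~\ref{000} gives $V=int_\tau(V)=cor(V)$, so $V\in\mathfrak{B}$. Therefore $U$ is a union of members of $\mathfrak{B}$, which means $U\in\tau_c$ and $\tau\subseteq\tau_c$, as required.
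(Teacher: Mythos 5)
Your proposal is correct and follows essentially the same route as the paper: the same half-and-half trick $\tfrac{1}{2}C+\tfrac{1}{2}C=C$ for addition, the same symmetrization $C\cap(-C)$ to obtain a balanced convex algebraically open neighborhood for scalar multiplication (your decomposition $\lambda x-\lambda_0x_0=\lambda(x-x_0)+(\lambda-\lambda_0)x_0$ is just cleaner bookkeeping for the paper's explicit estimate), separation of $0$ from $z$ by a linear functional (your coordinate slab $|\alpha_{i_0}|<1/2$ plays the role of the paper's half-spaces $f<1$, $f>1$), and the identical Lemma~\ref{000} argument for part (ii).
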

\begin{proof} By Lemmas \ref{topbasis} and \ref{basis}, $\tau_c$ is a topology on $X$. \\
Proof of part i: To prove this part, we should show that
$(X,\tau_c)$ is a Hausdorff space, and two operators addition
$+:X\times X\rightarrow X$ and scalar multiplication
$+:\mathbb{R}\times X\rightarrow X$ are $\tau_c$-continuous.

\textit{Continuity of addition:} Let $x,y\in X$ and let $V$ be a
$\tau_c$-open set containing $x+y$. We should find two
$\tau_c$-open sets $V_{x}$ and $V_{y}$ containing $x$ and $y$,
respectively, such that $V_{x}+V_{y}\subseteq V$. Since
$\mathfrak{B}$ is a basis for $\tau_c$, there exists $A\in
\mathfrak{B}$ such that $$x+y\in A\subseteq V.$$ Defining
$$V_{x}:=\frac{1}{2} (A-x-y)+x~~\textmd{ and }~~V_{y}:=\frac{1}{2}(A-x-y)+y,$$
by Lemma \ref{p}, we conclude that $V_{x},V_{y} \in \mathfrak{B}$
and $V_{x} + V_{y}=A \subseteq V.$ Convexity of $A$, implies that
$V_x$ and $V_y$ are the desired $\tau_c$-open sets, and hence the
addition operator is $\tau_c$-continuous.

\textit{Continuity of scalar multiplication:} Let $x\in X$, $
\alpha \in \mathbb{R}$, and $V$ be a $\tau_c$-open set
containing $\alpha x$. without lose of generality, assume that $V
\in \mathfrak{B}$. We must show that there exist $\varepsilon >0$
and a $\tau_c$-open set $V_{x}$ containing $x$ such that
$$(\alpha - \varepsilon,\alpha + \varepsilon)V_{x} \subseteq V.$$
Since $\alpha x \in V = cor(V)$, by considering $d:=\pm x$ in the
definition of algebraic interior, there exists $\delta>0$ such
that
$$\alpha x+\lambda x \in V,~~\lambda \in (-\delta,\delta).$$
Define
$$U:=(V-\alpha x)\cap-( V- \alpha x).$$
We get $U=-U$, $0\in U$, and by Lemma \ref{p}, $U\in\mathfrak{B}$.
Furthermore, $U$ is balanced (i.e. $\alpha U\subseteq U$ for each
$\alpha\in [-1,1]$), because $U$ is convex and $0\in U$. Now, we
claim that
\begin{equation}\label{*}
(\alpha-\frac{\delta}{2},\alpha+\frac{\delta}{2})\bigg(\frac{1}{2\mid\alpha\mid+\delta}U+x\bigg)\subseteq
V.
\end{equation}

To prove (\ref{*}), let
$\alpha+t\in(\alpha-\frac{\delta}{2},\alpha+\frac{\delta}{2})$
with $\mid t\mid<\frac{\delta}{2}$. Therefore
$$\mid \frac{\alpha+t}{2\mid \alpha\mid +\delta}\mid  =\frac{1}{2}\mid  \frac{\alpha+t}{\mid \alpha\mid +\frac{\delta}{2}}\mid  \leq\frac{1}{2}
\frac{\mid \alpha\mid +\mid t\mid }{\mid \alpha\mid +\frac{\delta}{2}}
\leq\frac{1}{2}$$ Thus,
$$\frac{\alpha+t}{2\mid \alpha\mid +\delta}U\subseteq\frac{1}{2}U.$$
Hence,
$$(\alpha+t)(\frac{1}{2\mid \alpha\mid +\delta}U+x)=\frac{\alpha+t}{2\mid \alpha\mid +\delta}U+\alpha x+tx\subseteq\frac{1}{2}U+\alpha x+tx$$
$$\subseteq\frac{1}{2}(V-\alpha x)+\alpha x+tx=\frac{1}{2}V+\frac{1}{2}(\alpha x+2tx)\subseteq\frac{1}{2}V+\frac{1}{2}V=V.$$
This proves (\ref{*}). Setting $\varepsilon:=\frac{\delta}{2}$ and
$V_x:=\frac{1}{2\mid \alpha\mid +\delta}U+x$ proves the continuity of the scalar multiplication operator.

Now, we show that $(X,\tau_c)$ is a Hausdorff space. To this end,
suppose $x_{0}\in X\setminus\{0\}$. Consider $f\in X^{'}$ such
that $f(x_{0})=2$, and set $A:=\{x\in X~:~f(x)<1\}$ and
$B:=\{x~:~f(x)>1\}$. It is not difficult to see that
$A,B\in\mathfrak{B}$ and $x_0\in B$ while $0\in A$. This implies that $(X,\tau_c)$ is a Hausdorff space.\\
\\
ii. Let $\tau$ be an arbitrary topology on $X$ which makes $X$
into a locally convex space. Let $B_{\tau}$ be a locally convex basis of
topology $\tau.$ For each $U\in B_{\tau}$ we have
$cor(U)=int_{\tau}(U)=U$ (by Lemma \ref{000}) and hence $U\in
\mathfrak{B}$. Thus we have $B_{\tau}\subseteq \mathfrak{B}$,
which leads to $\tau \subseteq \tau_c$ and completes the
proof.\end{proof}

The interior of $A\subseteq X$ with respect to $\tau_c$ topology
is denoted by $int_c(A).$ The following theorem shows
that the algebraic interior (i.e. $cor$) for convex sets is a
topological interior coming from $\tau_c$.

\begin{theorem}\label{int}
\cite[Proposition 6.3.1]{New2} Let $A\subseteq X$ be a convex set. Then $$int_c(A)=cor(A).$$
\end{theorem}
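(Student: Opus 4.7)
The plan is to prove the two inclusions separately, with each direction reducing to a one-line appeal to the machinery already set up.

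For the easy inclusion $int_c(A) \subseteq cor(A)$: since Theorem \ref{111}(i) shows $(X,\tau_c)$ is a TVS, Lemma \ref{000} applies with $\tau = \tau_c$ and gives $int_c(A) \subseteq cor(A)$ for every subset $A$ of $X$, with no convexity needed.

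For the reverse inclusion $cor(A) \subseteq int_c(A)$: the natural strategy is to show that $cor(A)$ is itself a $\tau_c$-open set (it would then be the largest $\tau_c$-open subset of $A$, giving the inclusion). If $cor(A) = \emptyset$ there is nothing to prove, so assume it is nonempty. To place $cor(A)$ in the basis $\mathfrak{B}$, I need two things:
\begin{enumerate}
\item[(a)] $cor(cor(A)) = cor(A)$, which follows immediately from Lemma \ref{p}(1) since $A$ is convex;
\item[(b)] $conv(cor(A)) = cor(A)$, i.e., $cor(A)$ is convex.
\end{enumerate}

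Item (b) is the only step requiring a small argument. Given $x,y\in cor(A)$ and $\lambda\in[0,1]$, for any direction $d\in X$ pick $\delta_x,\delta_y>0$ with $x+\mu d\in A$ on $[0,\delta_x]$ and $y+\mu d\in A$ on $[0,\delta_y]$; then the convex combination $\lambda x+(1-\lambda)y+\mu d=\lambda(x+\mu d)+(1-\lambda)(y+\mu d)$ lies in $A$ on $[0,\min(\delta_x,\delta_y)]$ by convexity of $A$, so $\lambda x+(1-\lambda)y\in cor(A)$. With (a) and (b) in hand, $cor(A)\in\mathfrak{B}\subseteq \tau_c$, so $cor(A)$ is $\tau_c$-open and $cor(A)\subseteq int_c(A)$.

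I do not anticipate a real obstacle here; the only place one must be careful is to actually verify convexity of $cor(A)$ rather than take it for granted, since that fact is not recorded in the cited lemmas. Combining both inclusions yields $int_c(A) = cor(A)$.
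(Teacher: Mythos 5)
Your proposal is correct and follows essentially the same route as the paper: the inclusion $int_c(A)\subseteq cor(A)$ via Lemma \ref{000}, and the reverse inclusion by showing $cor(A)\in\mathfrak{B}$ using $cor(cor(A))=cor(A)$ and convexity of $cor(A)$, so that $cor(A)$ is a $\tau_c$-open subset of $A$. The only difference is that you explicitly verify the convexity of $cor(A)$ (correctly), a fact the paper asserts without proof.
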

\begin{proof} Since $(X,\tau_c)$ is a TVS, $int_cA\subseteq cor(A)$; see
Lemma \ref{000}. Since $A$ is convex, $cor(A)$ is also convex, and
furthermore $cor(cor(A))=cor(A)$ (by Lemma \ref{p}). Hence,
$cor(A)\in\mathfrak{B}$. Therefore, $cor(A)\subseteq int_c(A)$,
because $int_c(A)$ in the biggest subset of $A$ belonging to
$\mathfrak{B}$. Thus $int_c(A)=cor(A)$, and the proof is
completed.
\end{proof}

Notice that the convexity assumption in Theorem \ref{int} is
essential; see Example \ref{ex}.


The proof of the following result is similar to that of Theorem
\ref{int}.

\begin{theorem}\label{icr}
If $A$ is a convex  subset of $X$, then $icr(A)=ri_c(A)$, where
$ri_c(A)$ denotes the relative interior of $A$ with respect to
the topology $\tau_c$.
\end{theorem}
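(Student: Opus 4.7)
The proof follows the two-sided inclusion strategy of Theorem \ref{int}.

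For $ri_c(A)\subseteq icr(A)$, pick $a\in ri_c(A)$ and a $\tau_c$-open $U$ with $a\in U$ and $U\cap aff(A)\subseteq A$. For any $x'\in L(A)$, continuity of scalar multiplication at $\lambda=0$ (Theorem \ref{111}) supplies $\lambda'>0$ such that $a+\lambda x'\in U$ for $\lambda\in[0,\lambda']$. Since $a+\lambda x'\in a+L(A)=aff(A)$, this forces $a+\lambda x'\in U\cap aff(A)\subseteq A$, and hence $a\in icr(A)$.

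For the reverse inclusion I plan to mirror the second half of the proof of Theorem \ref{int}, where one exhibits a basis element of $\tau_c$ that captures the interior of $A$. Set $M:=L(A)$, choose an algebraic complement $N$ with $X=M\oplus N$, and define
$$\tilde A \; := \; icr(A)+N.$$
Then $\tilde A$ is convex (sum of convex sets), and the direct-sum decomposition yields $\tilde A\cap aff(A)=icr(A)$: if $w+n=a_0+m$ with $w\in icr(A)$, $n\in N$, $a_0\in A$, $m\in M$, then $n=(a_0-w)+m\in M$, hence $n\in M\cap N=\{0\}$, and the intersection point equals $w\in icr(A)$. In particular, $\tilde A\cap aff(A)\subseteq A$.

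The remaining task is to check $\tilde A\in\mathfrak{B}$, i.e., $cor(\tilde A)=\tilde A$. The key ingredient is that $icr(A)$ is algebraically open inside $M$: for any $a_0\in A$ one reads $cor_M(A-a_0)=icr(A)-a_0$ directly from the definitions, and Lemma \ref{p}(1) combined with Lemma \ref{p}(3), both applied within the vector space $M$, then gives $cor_M(icr(A))=icr(A)$. With this in hand, for $x=w+n\in\tilde A$ and any direction $d=d_M+d_N\in X$, one has $w+\lambda d_M\in icr(A)$ for sufficiently small $\lambda$ while $n+\lambda d_N\in N$, so $x+\lambda d\in\tilde A$. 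Consequently $\tilde A\in\mathfrak{B}\subseteq\tau_c$, and since every $a\in icr(A)$ lies in $\tilde A$, we obtain $a\in ri_c(A)$. The main technical point is precisely the algebraic openness of $icr(A)$ inside $M$, which is where I expect to lean most carefully on Lemma \ref{p} applied within the subspace.
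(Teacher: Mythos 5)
Your proof is correct and follows essentially the route the paper intends: the paper gives no explicit argument, stating only that the proof is "similar to that of Theorem \ref{int}", and your two-inclusion scheme (the easy inclusion $ri_c(A)\subseteq icr(A)$ via continuity of the operations in $(X,\tau_c)$, and the converse by exhibiting a set of $\mathfrak{B}$ meeting $aff(A)$ inside $A$) is the natural way to carry that out. The extension $\tilde A=icr(A)+N$ with a complement $N$ of $L(A)$, justified by $cor_M(icr(A))=icr(A)$ via Lemma \ref{p}, correctly supplies the detail the paper leaves implicit.
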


It is seen that the convexity assumption plays a vital role in Theorems \ref{int}  and \ref{icr}. In the following two results, we are going to characterize the $\tau_c$-interior of an arbitrary (nonconvex) nonempty set with respect to the $core$ of its convex components.  Since $int_{c} (A)\in \tau_{c}$ and $\mathfrak{B}$ is a basis for $\tau_{c},$ the set $int_{c} (A)$ could be written as union of some subsets of $A$ which are algebraic open.

\begin{lemma}
Let $A$ be a nonempty subset of real vector space $X.$ Then $A$ could be uniquely decomposed to the maximal convex subsets of $A$, i.e. $ A:= \bigcup_{i \in I} A_{i} $ where $A_{i}, ~i\in I$ are non-identical maximal convex subsets (not necessary disjoint) of $A$ (Here, $A_{i}$ sets are called convex components of $A$).
\end{lemma}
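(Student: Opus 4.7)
The plan is to use Zorn's lemma on the poset of convex subsets of $A$ containing a prescribed point. Fix $x\in A$ and let $\mathcal{C}_x:=\{C\subseteq A : x\in C,~ C\text{ convex}\}$, partially ordered by set inclusion. The collection $\mathcal{C}_x$ is nonempty since $\{x\}\in\mathcal{C}_x$. The first substantive step is to verify that every chain $\{C_\alpha\}_{\alpha\in\Lambda}\subseteq\mathcal{C}_x$ has an upper bound in $\mathcal{C}_x$; the natural candidate is $C^\ast:=\bigcup_{\alpha\in\Lambda}C_\alpha$. One needs to check convexity: given $u,v\in C^\ast$ and $t\in[0,1]$, pick $\alpha,\beta$ with $u\in C_\alpha$, $v\in C_\beta$; since the family is a chain, one of $C_\alpha,C_\beta$ contains the other, say $C_\alpha\subseteq C_\beta$, so $u,v\in C_\beta$ and hence $tu+(1-t)v\in C_\beta\subseteq C^\ast$. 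Clearly $x\in C^\ast\subseteq A$, so $C^\ast\in\mathcal{C}_x$ is an upper bound. By Zorn's lemma, $\mathcal{C}_x$ contains a maximal element, call it $M_x$.

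With this in hand, I would set $I:=A$ and, with a slight abuse of notation, assign to each $x\in A$ such a maximal convex set $M_x$; then quotient out by repetitions to obtain a family $\{A_i\}_{i\in I}$ of pairwise non-identical maximal convex subsets. The containment $\bigcup_{i\in I} A_i\subseteq A$ is immediate from $M_x\subseteq A$, and the reverse containment $A\subseteq\bigcup_i A_i$ follows because every $x\in A$ lies in $M_x$. This produces the desired decomposition $A=\bigcup_{i\in I}A_i$.

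For the uniqueness claim, the point is that the notion of \emph{maximal convex subset of $A$} is intrinsic to $A$: a set $M\subseteq A$ is a maximal convex subset of $A$ iff $M$ is convex and for every convex $C$ with $M\subseteq C\subseteq A$ one has $C=M$. Hence the family of all maximal convex subsets of $A$ is canonically determined by $A$; any two decompositions of $A$ as a union of non-identical maximal convex subsets must use exactly this family, because every maximal convex subset $M$ must appear (pick any $x\in M$; the component containing $x$ in the decomposition is a convex subset of $A$ containing $M$, which by maximality equals $M$).

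I expect the main (only) genuine obstacle to be the chain argument for Zorn's lemma, which is routine once one invokes the chain property to reduce pairs of points to a common $C_\beta$; the rest is bookkeeping. One small subtlety worth flagging in the exposition is that the index set $I$ is not canonical, but the \emph{collection} $\{A_i\}_{i\in I}$ as a set of subsets of $X$ is; this is what ``unique'' should be interpreted to mean here.
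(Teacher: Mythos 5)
Your existence argument is correct and is essentially the paper's: apply Zorn's lemma to the convex subsets of $A$ containing a fixed point, take a maximal element, and union over all points (you spell out the chain argument, which the paper only implicitly invokes). One difference worth noting: the paper indexes by $I=\bigcup_{a\in A}\Upsilon_a$, where $\Upsilon_a$ is the set of \emph{all} maximal convex subsets of $A$ containing $a$, so its family $\{A_i\}_{i\in I}$ is, by construction, the canonical family of all maximal convex subsets of $A$. You instead select a single $M_x$ per point $x$; the resulting family depends on the choices made and may omit some maximal convex subsets, which already puts strain on your uniqueness discussion.

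The uniqueness part has a genuine gap. The key step, ``the component containing $x$ in the decomposition is a convex subset of $A$ containing $M$,'' is a non sequitur: that component contains $x$, not $M$, and distinct maximal convex subsets can share points (in the union of the two coordinate axes of $\mathbb{R}^2$, both axes are maximal convex subsets and both contain the origin). Worse, the statement you are trying to prove --- that every maximal convex subset of $A$ must appear in \emph{any} decomposition of $A$ into non-identical maximal convex subsets --- is false. Take $A=\mathbb{R}^2\setminus\{(0,0)\}$ and $H=\{(x,y): y>0\}\cup\{(x,0): x>0\}$; then $H$ and its rotations by $120^\circ$ and $240^\circ$ are three maximal convex subsets whose union is $A$, while $A$ has a continuum of maximal convex subsets (every rotation of $H$), almost all of which are absent from that decomposition. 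So ``uniqueness'' cannot mean that the covering family is the unique one; the tenable reading, and the direction the paper actually argues, is the reverse inclusion: the family of all maximal convex subsets is intrinsic to $A$, and if $A=\bigcup_{j\in J}B_j$ with each $B_j$ a maximal convex subset, then every $B_j$ is a member of that canonical family ($B_j\in\Upsilon_x$ for any $x\in B_j$). Your opening observation that maximality is intrinsic to $A$ is the right one; the subsequent ``must use exactly this family'' overshoots, and the argument offered for it does not work.
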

\begin{proof} For every $a \in A$, let $\Upsilon_a \subseteq P(A)$ be the set of all maximal convex subsets of $A$ containing $a$ (the nonemptiness of such $\Upsilon_{a}$ is derived from Zorn lemma). Set the index set $I:=  \bigcup_{a \in A}  \Upsilon_{a}$ (this type of defining $I$ enables us  to avoid repetition). For every $ i \in I,$ define $A_{i} = i.$ Hence, $ A= \bigcup_{i \in I} A_{i} $ where $A_{i}, ~i\in I$ are non-identical maximal convex subsets of $A.$ To prove the uniqueness of $A_{i}^{'}$s, suppose $ A= \bigcup_{j \in J} B_j$ such that $B_j~(j \in J)$ are non-identical maximal convex subsets of $A.$ Let $j \in J$ and $ x \in B_j $; then $ B_{j} \in \Upsilon_{x}$, and hence there exists $i \in I$ such that $ B_{j} = i =A_i.$ This means $\{ B_j : ~ j \in J  \} = \{ A_i : ~ i \in I \}$, and the proof is completed.\end{proof}

\begin{theorem}
Let $A$ be a nonempty subset of real vector space $X$. Then
$$ int_c (A) = \bigcup_{i \in I} cor(A_i),$$
where $A_i, ~ i \in I$ are convex components of $A$

\end{theorem}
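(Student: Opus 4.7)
The plan is to prove the two inclusions separately, leveraging Theorem \ref{int} for the easy direction and the basis $\mathfrak{B}$ together with a Zorn-type argument for the other.

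For the inclusion $\bigcup_{i \in I} cor(A_i) \subseteq int_c(A)$, I would fix an index $i \in I$ and exploit that $A_i$ is convex by construction. Theorem \ref{int} then gives $cor(A_i) = int_c(A_i)$, so $cor(A_i)$ is $\tau_c$-open. Since $cor(A_i) \subseteq A_i \subseteq A$, it is a $\tau_c$-open subset of $A$, and therefore contained in $int_c(A)$. Taking the union over $i \in I$ yields the inclusion.

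For the inclusion $int_c(A) \subseteq \bigcup_{i \in I} cor(A_i)$, fix $x \in int_c(A)$. Since $\mathfrak{B}$ is a basis for $\tau_c$, there exists $B \in \mathfrak{B}$ with $x \in B \subseteq A$. Now $B$ is convex, so I need to place it inside some maximal convex subset of $A$. For this I would invoke Zorn's lemma on the poset of convex subsets of $A$ containing $B$, ordered by inclusion: any chain has convex union (since any two of its elements are comparable and hence lie in a common convex member), so an upper bound exists, and Zorn yields a maximal convex subset of $A$ containing $B$. By the previous lemma on unique decomposition into convex components, this maximal set is exactly some $A_{i_0}$, so $B \subseteq A_{i_0}$.

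Finally I would argue $B \subseteq cor(A_{i_0})$ directly from the definition of algebraic interior, which is the step where one must be careful because $cor$ is not monotone in general. Since $B = cor(B)$, for every $y \in B$ and every direction $d \in X$ there exists $\lambda' > 0$ such that $y + \lambda d \in B$ for all $\lambda \in [0,\lambda']$; using $B \subseteq A_{i_0}$ this gives $y + \lambda d \in A_{i_0}$, hence $y \in cor(A_{i_0})$. Thus $x \in B \subseteq cor(A_{i_0}) \subseteq \bigcup_{i \in I} cor(A_i)$, finishing the proof. The only subtle point is the Zorn's lemma step together with this last observation that algebraic openness of $B$ inside $A_{i_0}$ transfers to membership in $cor(A_{i_0})$; the rest is a routine application of Theorem \ref{int}.
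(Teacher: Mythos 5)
Your proof is correct and follows essentially the same route as the paper: the easy inclusion via Theorem \ref{int}, and the reverse inclusion by taking a basic open set $B\in\mathfrak{B}$ around the point, using Zorn's lemma to place it inside a maximal convex subset of $A$ (hence a convex component $A_{i_0}$), and then passing from $B=cor(B)$ to $cor(A_{i_0})$. One small remark: your caution that $cor$ ``is not monotone in general'' is misplaced --- $cor$ is monotone under inclusion (it is $icr$ that fails monotonicity), and that monotonicity is precisely what your final step establishes and uses correctly.
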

\begin{proof} Let $ a \in  int_c (A).$ There exists $ V \in \mathfrak{B}$ such that $ a \in V \subseteq A.$ Set $ \Pi := \{ B \subseteq A : ~  V \subseteq B , ~ B=conv (B) \}.$ Clearly $V \in \Pi$ and hence $\Pi \neq \emptyset$. Furthermore it is easy to verify that each chain (totaly ordered subset) in $\Pi$ has an upper bound within $\Pi$. Therefore, using Zorn lemma, $\Pi$ has a maximal element. Let $B_{*}$ be maximal element of $\Pi$. Obviously, $B_{*}$ is a convex component of $A$. It leads to the existence of an $i \in I$ such that $a \in V \subseteq B_{*} = A_i $. Therefore, $ a \in cor(A_{i}).$ To prove the other side, suppose $ a \in cor(A_i)$ for some $ i \in I.$ Since $A_i$ is convex, by Theorem \ref{int}, $a \in cor(A_i) = int_c (A_i) \subseteq int_c (A).$\end{proof}


\begin{example}\label{ex}
Consider
$$ A:=\{ (x,y) \in \mathbb{R}^2 :   y \geq x^2 \} \cup \{ (x,y) \in \mathbb{R}^2 : ~  y \leq -x^2 \} \cup  \{ (x,y) \in \mathbb{R}^2 : ~  y=0  \}  $$
as a subset of $\mathbb{R}^2.$ It can be seen that $ (0,0) \in cor(A),$ while $(0,0) \notin int_c (A) = int_{ \|.\|_{2} } (A).$ However,  $int_c (A) = \bigcup_{i=1}^{4} cor(A_{i}) = \bigcup_{i=1}^{4} int_{ \|.\|_{2} } (A_{i}),$ where $A_{i}, ~ i=1,2,3,4$ are convex components of $A$ as follows
$$ A_1 = \{ (x,y) \in \mathbb{R}^2 :   y \geq x^2 \},  ~ A_2 = \{ (x,y) \in \mathbb{R}^2 : ~  y \leq -x^2 \}, $$
$$ A_3 = \{ (x,y) \in \mathbb{R}^2 : ~  y=0  \} , ~ A_4 = \{ (x,y) \in \mathbb{R}^2 : ~  x=0  \}. $$ \qed

\end{example}


The following theorem shows that the topological dual of
$(X,\tau_c)$ is the algebraic dual of $X$. In the proof of this
theorem, we use the topology which $X^{'}$ induces on $X$. This
topology, denoted by $\tau_0$, is as follows:
$$\tau_0=\{\cup \Gamma \in P(X)~:~\Gamma \subseteq \Psi\},$$
where
$$\Psi=\{ A \subseteq X~:~ A=f_1^{-1}(I_1)  \cap f_2^{-1}(I_2) \cap... \cap f_n^{-1}(I_n) \textmd{ for some } n\in \mathbb{N}, \textmd{ some}$$
$$~~~~~~~~~~\textmd{open intervals }I_1, I_2, ... , I_n\subseteq \mathbb{R}\textmd{
and some }f_1, f_2, ..., f_n \in X^{'}\}.$$

\begin{theorem}\label{int1} \cite{New1} $(X,\tau_c)^*=X^{'}.$
\end{theorem}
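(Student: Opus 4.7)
The inclusion $(X,\tau_c)^* \subseteq X'$ is immediate, since any $\tau_c$-continuous linear functional is in particular a linear functional and hence an element of the algebraic dual. The content of the theorem is the reverse inclusion: every $f \in X'$ is automatically $\tau_c$-continuous.

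My plan is to exploit the maximality asserted in Theorem \ref{111}(ii). Consider the topology $\tau_0$ described in the preamble to the theorem, namely the initial topology on $X$ induced by the family $X'$, with subbasis $\Psi = \{f^{-1}(I) : f \in X',\ I\subseteq \mathbb{R}\ \text{open interval}\}$. By construction every $f \in X'$ is $\tau_0$-continuous, since the preimage of any open subset of $\mathbb{R}$ under $f$ is a union of members of $\Psi$ and hence lies in $\tau_0$. If one can show that $(X,\tau_0)$ is a locally convex Hausdorff TVS, then Theorem \ref{111}(ii) gives $\tau_0 \subseteq \tau_c$, and because continuity with respect to a coarser topology implies continuity with respect to a finer one, each $f \in X'$ is also $\tau_c$-continuous, yielding $X' \subseteq (X,\tau_c)^*$.

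Verifying that $(X,\tau_0)$ is a locally convex Hausdorff TVS is the main task, but it is essentially routine. Local convexity is automatic: every basic open set in $\Psi$ is a finite intersection of preimages of open intervals under linear maps, hence convex, so $\Psi$ is a basis of convex neighborhoods of its points. Continuity of addition and scalar multiplication follows from the universal property of the initial topology, which reduces both to the continuity of $f \circ (+) : X \times X \to \mathbb{R}$ and $f \circ (\cdot) : \mathbb{R} \times X \to \mathbb{R}$ for each $f \in X'$, and these are linear/bilinear functions of their coordinates with $f$ fixed, hence continuous. The Hausdorff property is the only step with genuine content: given $x \neq 0$, one needs $f \in X'$ with $f(x) \neq 0$, which is obtained by extending $\{x\}$ to a Hamel basis of $X$ via Zorn's lemma and taking the coordinate projection onto $x$; then $\{y : f(y) < \tfrac{1}{2}\}$ and $\{y : f(y) > \tfrac{1}{2}\}$ separate $0$ and $x$.

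I do not anticipate a serious obstacle; the only conceptual subtlety is recognizing that the maximality of $\tau_c$ among locally convex topologies is exactly the tool that converts the tautological $\tau_0$-continuity of $X'$ into $\tau_c$-continuity. In particular, no Hahn--Banach type extension argument is required, because all of $X'$ is available from the start.
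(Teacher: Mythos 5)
Your proposal is correct and follows essentially the same route as the paper: both pass to the topology $\tau_0$ induced by $X'$, use the maximality of $\tau_c$ from Theorem \ref{111}(ii) to get $\tau_0\subseteq\tau_c$, and conclude that every $f\in X'$ is $\tau_c$-continuous. The only difference is that you verify directly that $(X,\tau_0)$ is a locally convex Hausdorff TVS with all of $X'$ continuous, whereas the paper cites \cite[Theorem 3.10]{rud} for these routine facts.
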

\begin{proof} Let $\tau_0$ denote the topology which $X^{'}$ induces
on $X$. By \cite[Theorem 3.10]{rud}, $\tau_0$ makes $X$ into a
locally convex TVS and $(X,\tau_0)^*=X^{'}$. By Theorem \ref{111},
$\tau_0\subseteq \tau_c$. Hence $(X,\tau_c)^*=X^{'}$.\end{proof}

The following result provides a characterization of
finite-dimensional spaces utilizing $\tau_c$ and the
topology induced by $X^{'}$ on $X$.

\begin{theorem}
$X$ is finite-dimensional if and only if $\tau_0=\tau_c.$
\end{theorem}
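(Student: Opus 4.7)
The plan is to treat the two directions separately, with the forward direction being a short appeal to uniqueness and the reverse direction resting on one explicit construction.

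For the forward implication, I would observe that $\tau_0$ is Hausdorff (since $X'$ separates points) and locally convex (by \cite[Theorem 3.10]{rud}, used in the proof of Theorem \ref{int1}), and $\tau_c$ is Hausdorff and locally convex by Theorem \ref{111}. In finite dimensions there is a unique Hausdorff vector topology on $X$, so both topologies coincide with the Euclidean one and hence with each other.

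For the reverse implication, I would argue by contrapositive: assuming $\dim X = \infty$, I will produce a set that lies in $\tau_c$ but not in $\tau_0$. Fix a Hamel basis $\{e_i\}_{i\in I}$ of $X$ and set
\[
U := \Bigl\{\sum_{i\in I} x_i e_i \in X : |x_i|<1 \text{ for every } i\in I\Bigr\},
\]
where, as always, each Hamel expansion is finite. Two routine verifications put $U\in\mathfrak{B}$, hence $U\in\tau_c$: convexity follows from the convexity of each one‑dimensional constraint, and $\operatorname{cor}(U)=U$ follows because for any $x\in U$ and any direction $d\in X$, only finitely many coordinates are involved in $x+\lambda d$, so every coordinate constraint survives for sufficiently small $\lambda>0$ (this is exactly the pattern used in Lemma \ref{C1}).

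The crux is then to contrast $U$ with $\tau_0$-neighborhoods of $0$. On one hand, $U$ contains no nontrivial linear subspace: if $y\neq 0$ has a nonzero Hamel coordinate $y_{i_0}$, then $\lambda y\notin U$ whenever $|\lambda|\geq 1/|y_{i_0}|$. On the other hand, every basic member of $\tau_0$ containing $0$ is of the form $\bigcap_{k=1}^{n} f_k^{-1}(I_k)$ with $f_k\in X'$ and $0\in I_k$, and therefore contains the algebraic subspace $\bigcap_{k=1}^{n}\ker f_k$, which has codimension at most $n$ and is consequently infinite‑dimensional (in particular nontrivial) when $\dim X=\infty$. If $\tau_c=\tau_0$, then $U$ would be $\tau_0$-open and hence would contain such a nontrivial subspace, contradicting the previous sentence. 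The only real obstacle is locating the set $U$; everything else is routine verification.
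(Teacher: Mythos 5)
Your proof is correct and follows essentially the same route as the paper: the forward direction rests on the uniqueness of the Hausdorff vector topology in finite dimensions, and for the converse both arguments exhibit a convex, algebraically open (hence $\tau_c$-open) neighborhood of the origin containing no nontrivial linear subspace --- the paper takes the unit ball of the $\ell_1$-norm of Hamel coordinates, you take the coordinate box --- and contrast it with the fact that every $\tau_0$-neighborhood of $0$ contains a subspace of finite codimension. The only differences are cosmetic: you verify membership in $\mathfrak{B}$ and the subspace fact directly, whereas the paper appeals to the maximality of $\tau_c$ and cites Rudin.
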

\begin{proof} Assume that $X$ is finite-dimensional. Since there is only one
topology on $X$ which makes this space a TVS, we have
$\tau_0=\tau_c.$

To prove the converse, by indirect proof assume that
$dim(X)=\infty$. Let $\beta=\{x_{i}\}_{i\in I},$ be an ordered
basis of $X$; and $[x]_{\beta}$ denote the vector of coordinates
of $x\in X$ with respect to the basis $\beta$. It is easy to show
that, $[x]_{\beta} \in \mathit{l}_{1}(I)$ for each $x\in X$, where
$$l_1(I)= \{\{t_{i}\}_{i\in I}\subseteq \mathbb{R}~:~\|\{t_{i}\}\|_{1} = \sum_{i \in I}\mid t_{i} \mid  <  \infty  \}.$$
Define $\Omega: X \longrightarrow \mathbb{R}$ by $\Omega
(x)=\left \| [x]_{\beta} \right \|_{1}. $ Since $ \left \| .
\right \|_{1} $ is a norm  on $ \mathit{l}_{1}(I) $, thus $
\Omega $ is also a norm on $X.$ We denote this norm by
$\|.\|_{0}$.

Since $\tau_c$ is the strongest locally convex topology on $X$,
we have $B_1^{\|.\|_{0}}(0)\in \tau_c $, where
$B_1^{\|.\|_{0}}(0)$ stands for the unit ball with $\|.\|_0.$  On
the other hand, every $\tau_0$-open set containing origin,
contains an infinite-dimensional subspace of $X$ (see
\cite{rud}). Hence, $ B_1^{\|.\|_0}(0)\notin \tau_0.$ This
implies $\tau_0\neq \tau_c$, and the proof is completed.
\end{proof}

Theorem \ref{vcl} demonstrates that the vectorial closure ($vcl$)
for relatively solid convex sets, in vector space $X$, is a
topological closure coming from $\tau_c$. The
closure of $A\subseteq X$ with respect to $\tau_c$ is denoted by $cl_c(A).$

\begin{theorem} \label{vcl}
Let $A\subseteq X$ be a convex and relatively solid set. Then $vcl(A)=cl_c(A).$
\end{theorem}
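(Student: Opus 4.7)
The plan is to prove the two inclusions $vcl(A) \subseteq cl_c(A)$ and $cl_c(A) \subseteq vcl(A)$ separately. The first will follow from the $\tau_c$-continuity of the vector space operations guaranteed by Theorem \ref{111}, and will not even require convexity or relative solidity. The second will use both hypotheses, routed through the classical line segment principle of convex analysis applied inside the TVS $(X,\tau_c)$, together with the identification $icr(A)=ri_c(A)$ from Theorem \ref{icr}.

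For the forward inclusion, I would fix $b\in vcl(A)$ together with the witnessing direction $x\in X$, and pick an arbitrary $\tau_c$-open neighborhood $V$ of $b$. The affine map $\phi:\mathbb{R}\to X$, $\phi(\lambda)=b+\lambda x$, is $\tau_c$-continuous because addition and scalar multiplication are (Theorem \ref{111}). Hence $\phi^{-1}(V)$ is an open subset of $\mathbb{R}$ containing $0$, so it contains some symmetric interval $(-\lambda',\lambda')$ with $\lambda'>0$. Applying the defining property of $vcl(A)$ at threshold $\lambda'/2$ produces $\lambda\in[0,\lambda'/2]$ with $b+\lambda x\in A$; this point lies in $V\cap A$, so $b\in cl_c(A)$.

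For the reverse inclusion, I would fix $b\in cl_c(A)$, invoke relative solidity to choose an anchor point $a_0\in icr(A)$ (which equals $ri_c(A)$ by Theorem \ref{icr}), and set $x:=a_0-b$. The core step is to show that $b+tx=(1-t)b+ta_0\in A$ for every $t\in(0,1]$. This is precisely the content of the line segment principle: in any TVS, if $A$ is convex, $a_0\in ri(A)$, and $b\in cl(A)$, then the half-open segment from $b$ toward $a_0$ lies inside $ri(A)$. Since Theorem \ref{111} makes $(X,\tau_c)$ a locally convex Hausdorff TVS, the principle applies and delivers $(1-t)b+ta_0\in ri_c(A)\subseteq A$ for $t\in(0,1]$. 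Then for any $\lambda'>0$ the choice $\lambda:=\min\{\lambda',1\}/2\in(0,\lambda']$ gives $b+\lambda x\in A$, which is the $vcl$-witness with direction $x$; hence $b\in vcl(A)$.

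The main obstacle is the reverse inclusion: one must manufacture a single global direction $x$ such that arbitrarily short displacements of $b$ land back in $A$. Relative solidity is exactly the ingredient that supplies such a direction, since without an interior-like point $a_0$ there is no systematic way to anchor a line segment inside $A$. Modulo that, the argument reduces to the standard line segment principle for convex sets in a locally convex Hausdorff TVS, which is available here thanks to Theorem \ref{111}. A small sanity check: the degenerate cases $b\in A\cap icr(A)$ (take $x=0$) and $b=a_0$ (so $x=0$) are handled trivially and do not disturb the scheme above.
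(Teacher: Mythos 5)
Your forward inclusion is fine (and the paper indeed dismisses it as routine), but the reverse inclusion has a genuine gap at its key step. The ``line segment principle'' you invoke --- that in \emph{any} TVS, $A$ convex, $a_0\in ri(A)$ and $b\in cl(A)$ imply $(b,a_0]\subseteq ri(A)$ --- is false in general for \emph{relative} interiors. The classical principle (e.g.\ \cite[Lemma 1.32]{jah}) uses the genuine topological interior; the relative version only follows after one knows that the closure point $b$ lies in $aff(A)$, equivalently when the affine hull is closed. Without that, it fails: take $A$ a dense proper linear subspace of a normed space, so $aff(A)=A$, $ri(A)=A\neq\emptyset$, $cl(A)=X$, and for $b\notin A$, $a_0\in A$ the segment $(b,a_0)$ misses $A$ entirely. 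So your appeal to a ``classical'' lemma hides exactly the point where relative solidity alone is not enough and a special property of $\tau_c$ must enter.

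That special property is $(X,\tau_c)^{*}=X^{'}$ (Theorem \ref{int1}), which forces every affine subspace to be $\tau_c$-closed, and this is precisely the step the paper's proof supplies and yours omits: after translating so that $a_0=0$ and $Y:=span(A)=aff(A)$, one must show $b\in Y$. If $b\notin Y$, choose $f\in X^{'}$ with $f\equiv 0$ on $Y$ and $f(b)=1$; then $\{z:\ f(z)>\tfrac12\}$ is a $\tau_c$-open neighborhood of $b$ disjoint from $A$, contradicting $b\in cl_c(A)$. Once $b\in Y$ is secured, your scheme goes through essentially as in the paper: restrict to the TVS $Y$, note $a_0\in int_c^{Y}(A)$ (Theorem \ref{int} inside $Y$, which is what $icr$ gives you), and apply the honest interior version of the segment principle to get $(b,a_0]\subseteq A$, hence $b\in vcl(A)$. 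In short, the architecture of your argument matches the paper's, but you need to add the membership $b\in aff(A)$ (via a separating functional from $X^{'}$) rather than cite a relative-interior segment principle that does not hold in arbitrary TVSs.
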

\begin{proof} Since  $(X,\tau_c)$ is a TVS, it is easy to verify that
$vcl(A)\subseteq cl_c(A).$ To prove the converse, let  $x\in
cl_c(A)$ and $a\in icr(A)$. Without loss of generality, we assume
$a=0$, and then we have $Y:= span(A)= aff(A)=L(A),$ and $0\in
cor^{Y}(A)$, where $cor^Y(A)$ stands for the algebraic interior of
$A$ with respect to the subspace $Y$. We claim that $x\in Y.$ To
show this, assume that $x\notin Y$; then there exists a functional
$f\in X^{'}$ such that $f(x)=1$ and $f(y)=0$ for each $y\in Y$.
Therefore the set $$U:= \{ z \in X:~ f(z)>\frac{1}{2}\}$$ is a
$\tau_c-$open neighborhood of $x$  with $U\cap A=\emptyset,$
which contradicts $x\in cl_c(A).$ Now, we restrict our attention
to subspace $Y$. By Theorem \ref{int}, $a\in int^Y_c(A)$, and
thus, by \cite[Lemma 1.32]{jah}, $(x,a]\subseteq {int}_c^Y(A) \subseteq A $,
which means $x\in vcl(A)$. Therefore  $cl_c(A)\subseteq vcl(A)$,
and the proof is completed.
\end{proof}

\begin{corollary}
Let $A$ be a nonempty subset of $X$ with finite many convex components.  Then
$$  cl_c (A) = \bigcup_{i=1}^{n} vcl (A_{i})     $$
where $ A_{i} , ~ i=1,2,...,n $ are convex components of $A$.
\end{corollary}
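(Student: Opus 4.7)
The plan is to combine two observations: topological closure distributes over finite unions, and on each individual convex component the closure $cl_c$ already coincides with $vcl$ via Theorem \ref{vcl}. So the finiteness hypothesis is used exactly once, at the step where we move the closure inside a union.

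First, I would write $A=\bigcup_{i=1}^{n} A_{i}$ using the decomposition into maximal convex subsets established in the preceding lemma. Since $\tau_{c}$ is a topology on $X$, its closure operator satisfies the standard identity
$$cl_{c}(B_{1}\cup B_{2})=cl_{c}(B_{1})\cup cl_{c}(B_{2}),$$
which extends by induction to any finite union. Applied to the convex components this yields $cl_{c}(A)=\bigcup_{i=1}^{n} cl_{c}(A_{i})$. Note that with infinitely many components one only has the inclusion $\bigcup_{i} cl_{c}(A_{i})\subseteq cl_{c}(A)$, so this is the place where the hypothesis ``finitely many convex components'' is genuinely needed.

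Second, for each fixed $i$, the set $A_{i}$ is convex, so Theorem \ref{vcl} gives $cl_{c}(A_{i})=vcl(A_{i})$ and substituting into the previous display finishes the argument. To check the inclusion $vcl(A_{i})\subseteq cl_{c}(A_{i})$, which is one half of Theorem \ref{vcl} and holds for any subset without convexity assumptions: if $b\in vcl(A_{i})$ with witness direction $x$, then continuity of scalar multiplication at $\lambda=0$ in the TVS $(X,\tau_{c})$ (Theorem \ref{111}) forces every $\tau_{c}$-neighborhood $V$ of $b$ to contain $b+\lambda x$ for all sufficiently small $\lambda$; the definition of $vcl$ then supplies an actual point $b+\lambda x\in A_{i}\cap V$. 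So only the opposite inclusion draws on Theorem \ref{vcl}.

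The main delicate point I anticipate is the relative-solidity hypothesis in Theorem \ref{vcl}: a maximal convex subset of $A$ need not satisfy $icr(A_{i})\neq\emptyset$ in an infinite-dimensional setting. The natural way around this is to pass to $Y_{i}:=aff(A_{i})$ (after translation, a subspace) and work inside $(Y_{i},\tau_{c}|_{Y_{i}})$ where $A_{i}$ is by construction full-dimensional and hence one can invoke the relative-interior version of the same argument used in Theorem \ref{vcl}; the key step there is the application of \cite[Lemma 1.32]{jah} on segments $(x,a]\subseteq int_{c}^{Y_{i}}(A_{i})$, which only requires $icr(A_{i})$ computed inside the affine hull. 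Thus, assuming each convex component is relatively solid (or arguing componentwise in $aff(A_{i})$), the chain of equalities
$$cl_{c}(A)=\bigcup_{i=1}^{n} cl_{c}(A_{i})=\bigcup_{i=1}^{n} vcl(A_{i})$$
closes the proof.
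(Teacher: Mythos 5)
Your argument is essentially the paper's own proof: the paper likewise writes $cl_c(A)=\bigcup_{i=1}^{n} cl_c(A_i)=\bigcup_{i=1}^{n} vcl(A_i)$, using that $\tau_c$-closure commutes with finite unions and then applying Theorem \ref{vcl} to each convex component; your verification of the elementary inclusion $vcl(A_i)\subseteq cl_c(A_i)$ via continuity of scalar multiplication is also fine.

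The one substantive point is the relative-solidity issue you flag at the end, and there your proposed repair does not work. Passing to $Y_i:=aff(A_i)$ buys nothing: $icr(A_i)\neq\emptyset$ \emph{is} precisely the statement that $A_i$ has nonempty core relative to its affine hull, and this can fail in infinite dimensions even though $A_i$ is trivially ``full-dimensional'' in $aff(A_i)$. For instance, in the space of finitely supported real sequences, the cone $K$ of componentwise nonnegative sequences satisfies $aff(K)=X$ but $cor(K)=icr(K)=\emptyset$, and $K$ is a maximal convex subset (hence a convex component) of $K\cup(-K)$. So Theorem \ref{vcl} cannot be invoked on such a component, and your argument -- exactly like the paper's one-line proof, which silently does the same thing -- establishes the corollary only under the additional hypothesis that each convex component is relatively solid (the fallback you yourself mention), or in settings such as finite-dimensional $X$ where every nonempty convex set is automatically relatively solid.
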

\begin{proof} According to Theorem \ref{vcl}, we have
$$ cl_c (A) = \bigcup_{i=1}^{n} cl_c (A_{i}) = \bigcup_{i=1}^{n} vcl (A_{i}).$$\end{proof}

The equality $ cl_c(A) = \bigcup_{i \in I } vcl(A_{i}) $ may not be true in general; even if each convex component, $A_{i}$, is relatively solid. For example, consider $\mathbb{Q}$ as the set of rational numbers in $\mathbb{R}$. The convex components of $\mathbb{Q}$ are singleton, which are relatively solid. Therefore $\mathbb{Q}=\bigcup_{q \in Q}\{q\}$, while
$$ \bigcup_{q \in\mathbb{Q}} vcl(\{q\}) = \bigcup_{q \in \mathbb{Q}} \{q\} = \mathbb{Q} \neq \mathbb{R} = cl_c(\mathbb{Q}).$$


A TVS $(X,\tau)$ is called metrizable  if there exists a metric
$d:X\times X\longrightarrow \mathbb{R}$ such that $d$-open sets
in $X$ coincide with $\tau$-open sets in $X$. Now, we are going
to show that $(X,\tau_c)$ is not metrizable when $X$ is
infinite-dimensional. Lemma \ref{lid} helps us to prove it. This
lemma shows that every algebraic basis of the real vector
space $X$ is far from the origin with respect to $\tau_c$
topology.

\begin{lemma}\label{lid}
Let $\{x_{i}\}_{i\in I}$ be an algebraic basis of $X$. Then
$0\notin cl_c(\{x_{i}\}_{i\in I}).$
\end{lemma}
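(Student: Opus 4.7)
The plan is to exhibit a single $\tau_c$-open neighborhood of $0$ that misses every basis vector $x_i$, so that $0$ cannot be a $\tau_c$-limit point of the set $\{x_i\}_{i\in I}$. The natural candidate is the open unit ball of the $\ell_1$-norm relative to the basis, which has already appeared in this paper in the proof distinguishing $\tau_0$ from $\tau_c$.

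Concretely, for each $x\in X$ let $[x]_{\beta}=\{\alpha_i(x)\}_{i\in I}$ denote the (finitely supported) coordinate vector of $x$ with respect to the given basis $\beta=\{x_i\}_{i\in I}$, and set $\|x\|_0:=\sum_{i\in I}|\alpha_i(x)|$. This is a norm on $X$, as noted in the earlier finite-dimensionality theorem. I would then define
$$U:=\bigl\{x\in X:\|x\|_0<1\bigr\}$$
and claim that $U\in\mathfrak{B}$, i.e.\ $U$ is convex and algebraically open. Convexity is immediate from the triangle inequality and positive homogeneity of $\|\cdot\|_0$. For algebraic openness, given $x\in U$ and any $d\in X$, the inequality $\|x+\lambda d\|_0\le\|x\|_0+|\lambda|\,\|d\|_0$ shows that $x+\lambda d\in U$ for all sufficiently small $|\lambda|$, so $\mathrm{cor}(U)=U$.

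Therefore $U$ is a member of $\mathfrak{B}$ and hence a $\tau_c$-open neighborhood of $0$. However, $\|x_i\|_0=1$ for every $i\in I$, so $x_i\notin U$ for every $i$. This exhibits a $\tau_c$-open set separating $0$ from $\{x_i\}_{i\in I}$, which is exactly the statement $0\notin\mathrm{cl}_c(\{x_i\}_{i\in I})$.

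The only real work is checking that $U\in\mathfrak{B}$, but this reduces to the usual normed-space verification outlined above; the one subtle point to keep in mind is that $\|\cdot\|_0$ really is well-defined on all of $X$ because every $x\in X$ has a \emph{finite} expansion in the algebraic basis, so the defining sum has only finitely many nonzero terms.
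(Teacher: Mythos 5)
Your proof is correct and is essentially the paper's argument in different clothing: the paper's auxiliary set $A=\big\{\sum_{i\in F}\lambda_i x_i-\sum_{i\in F}\mu_i x_i:\ \lambda_i,\mu_i>0,\ \sum_{i\in F}(\lambda_i+\mu_i)=1,\ F\subseteq I \text{ finite}\big\}$ is exactly your unit ball $\{x:\|x\|_0<1\}$, so both exhibit the same convex, algebraically open ($\tau_c$-open) neighborhood of $0$ that is disjoint from $\{x_i\}_{i\in I}$. The only difference is bookkeeping: you verify $U\in\mathfrak{B}$ and $x_i\notin U$ via the norm axioms and $\|x_i\|_0=1$, whereas the paper checks convexity, $0\in cor(A)$ (via Lemma \ref{C1}), and $\{x_i\}_{i\in I}\cap A=\emptyset$ by direct computation with the coefficients.
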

\begin{proof} Define\\ $$\begin{array}{l}
A:=\bigg\{\sum_{i\in F}\lambda_{i}x_{i}-\sum_{i\in F}\mu_{i}x_{i}:\sum_{i\in F}(\lambda_{i}+\mu_{i})
=1,~\lambda_{i},\mu_{i}>0;~\forall i\in F,\\
~\textmd{~~~~~~~~~~~~~~~~~~~~~~~~~~~~~~~~~~~~~~~~~~~~~~~~~~~~~~~~~~
and}~F\textmd{ is a}\textmd{finite subset of }I\bigg\}.
\end{array}$$
We claim that the following propositions hold true; \\
a. $A~is~convex $, \\
b. $ 0 \in cor(A) $, \\
c. $\{x_{i}\}_{i\in I}\cap A=\emptyset$.

To prove (a), assume that $x,y\in A$ and $t\in(0,1).$ Then there
exist positive scalars
$\lambda_{i},\mu_{i},\overline{\lambda}_{i},\overline{\mu}_{i}$
and finite sets $F,S\subseteq I$ such that;

$$x=\sum_{i \in F} \lambda_{i} x_{i} - \sum_{i \in F} \mu _{i} x_{i}, ~~~~~\quad y=\sum_{i \in S} \overline{\lambda}_{i} x_{i}-\sum_{i \in S} \overline{\mu}_{i}x_{i},$$
and \quad ~~\quad \quad $\sum_{i \in F} (\lambda_{i} + \mu_{i}) =  \sum_{i \in S}(\overline{\lambda}_{i} + \overline{\mu}_{i})=1$.\\\\
Hence,
$$ tx + (1-t) y = \sum_{i \in F \cup S }   \widehat{\lambda}_{i}  x_{i} - \sum_{i \in F \cup S } \widehat{\mu}_{i}x_{i},~~~~~~~~~~~~$$
 where
 $$\widehat{\lambda}_{i} = \left\{\begin{array}{ll}
t \lambda_{i} + (1-t)\overline{\lambda}_{i} &  ~~~i \in F \cap S \\
t \lambda_{i} & ~~~i \in F \setminus S \\
(1-t)\overline{\lambda}_{i} &  ~~~i \in S \setminus F
\end{array}\right.~~~~~~\widehat{\mu}_{i}=\left\{\begin{array}{ll}
t \mu_{i} + (1-t)\overline{\mu}_{i} & ~~~ i \in F \cap S \\
t \mu_{i} & ~~~i \in F \setminus S \\
(1-t)\overline{\mu}_{i} &  ~~~i \in S \setminus F
\end{array}\right.$$
Furthermore, $$\begin{array}{ll} \sum_{i\in F\cup
S}\widehat{\lambda}_{i}+\widehat{\mu}_{i}&=\sum_{i \in F \cap S}
\left [t(\lambda_{i} + \mu_{i} ) + (1-t) (
\overline{\lambda}_{i}+\overline{\mu}_{i})\right]+\sum_{i\in F
\setminus S}t(\lambda_{i}+\mu_{i})\vspace{2mm}\\
&+\sum_{i \in S \setminus F } (1-t)
(\overline{\lambda}_{i}+\overline{\mu}_{i})=\sum_{i \in F } t
(\lambda_{i}+\mu_{i})+\sum_{i \in
S}(1-t)(\overline{\lambda}_{i}+\overline{\mu}_{i})\vspace{2mm}\\
&=t+(1-t)=1. \end{array}$$ Therefore $tx+(1-t)y\in A$, and hence
$A$ is a convex set.

To prove (b), first notice that $\pm \frac{1}{2} x_{i} \in A $ for each $i \in I$; then due to the convexity of $A$, from (a) we get $0\in cor(A)$, because of Lemma \ref{C1}.

We prove assertion (c) by indirect proof. If $\{x_{i}\}_{i\in
I}\cap A \neq \emptyset$, then $$x_{j} = \sum_{i \in F}
\lambda_{i} x_{i} - \sum_{i \in F} \mu _{i} x_{i}$$ for some $ j
\in I $, and some finite set $F\subseteq I$. Also, $\lambda_i$
and $\mu_i$ values are positive and $\sum_{i\in
F}(\lambda_i+\mu_i)=1$. Since $\{x_{i}\}_{i\in F}\cup \{ x_{j}\} $
is a linear independent set, we have $j\in F$, and also $ 1 =
\lambda_{j} - \mu_{j} $. Hence $ \lambda_{j}
> 1$ which is a contradiction. This completes the proof of
assertion (c).

Now, we prove the lemma invoking $(a)-(c)$. Since $A$ is convex,
by theorem \ref{int} and claim (b), $cor(A)$ is a $\tau_c-$open
neighborhood of $0$. On the other hand, (c) implies $cor(A)\cap
\{ x_{i} \}_{ i \in I } = \emptyset.$ Thus, $0\notin cl_c( \{
x_{i} \}_{ i \in I } ).$
\end{proof}


\begin{theorem} If $X$ is infinite-dimensional, then $(X,\tau_c )$
is not metrizable.
\end{theorem}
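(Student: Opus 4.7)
The plan is to argue by contradiction, combining the standard fact that a metrizable topological vector space has a countable local base at the origin with Lemma \ref{lid}. Assume $(X,\tau_c)$ is metrizable; then I can fix a countable decreasing local base $\{V_n\}_{n\in\mathbb{N}}$ of balanced $\tau_c$-neighborhoods of $0$. Because $X$ is infinite-dimensional, I also fix an algebraic basis $\{e_i\}_{i\in I}$ of $X$ and single out a countable subfamily $\{e_n\}_{n\in\mathbb{N}}$ of it.

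Next, I will use the continuity of scalar multiplication (Theorem \ref{111}(i)) to rescale this subfamily toward the origin. Since each $V_n$ is a neighborhood of $0$ in a TVS and therefore absorbing, there is a scalar $\alpha_n>0$ with $\alpha_n e_n\in V_n$. Setting $y_n:=\alpha_n e_n$ and using the decreasing nature of $\{V_n\}$, I obtain $y_n\to 0$ in $\tau_c$.

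The decisive step is then the purely algebraic observation that, because each $\alpha_n\neq 0$, the family $\mathcal{B}':=\{y_n\}_{n\in\mathbb{N}}\cup\{e_i:\,e_i\notin\{e_n\}_{n\in\mathbb{N}}\}$ is again an algebraic basis of $X$ (each original $e_n=\alpha_n^{-1}y_n$ is recovered in the span, and linear independence is clearly preserved). Applying Lemma \ref{lid} to $\mathcal{B}'$ yields $0\notin cl_c(\mathcal{B}')$, so there is a $\tau_c$-open neighborhood of $0$ that misses every $y_n$. This contradicts $y_n\to 0$ and finishes the argument.

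The main obstacle I anticipate is conceptual rather than computational: one must recognize that Lemma \ref{lid}, though literally stated for a fixed algebraic basis, is stable under replacing individual basis vectors by nonzero scalar multiples, since the rescaled family is itself an algebraic basis. Once this is noticed, metrizability supplies exactly the scalars needed to push basis vectors into every neighborhood of $0$, which is incompatible with the lemma.
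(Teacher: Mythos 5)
Your proof is correct and takes essentially the same route as the paper: both arguments use metrizability (via a countable local base at $0$) to manufacture a $\tau_c$-null sequence whose terms are members of an algebraic basis of $X$, and then contradict Lemma \ref{lid}. The only difference is in the construction — the paper chooses linearly independent vectors $x_k$ with $d(0,x_k)<\frac{1}{k}$ and extends them to a basis, while you rescale countably many vectors of a fixed basis into the shrinking neighborhoods using the fact that neighborhoods of $0$ are absorbing; both constructions are valid and the rescaled family is indeed still a basis, so Lemma \ref{lid} applies as you claim.
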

\begin{proof} Suppose that $(X,\tau_c)$ is an infinite-dimensional metrizable TVS with metric $d(.,.).$ Then for every $n\in
\mathbb{N}$ choose $x_{k}\in X,~k=1,2,\ldots,n$ such that
$d(0,x_{k})<\frac{1}{k}$ and $\{x_{1},x_{2},\ldots,x_{n}\}$ is
linear independent. This process generates a linear independent
sequence $\{x_{n}\}_{n\in\mathbb{N}}$ such that,
$x_{n}\longrightarrow^{^{\hspace{-4mm}\tau_c}} 0$ (i.e. $\{x_{n}\}$ is
$\tau_c-$convergent to zero). Furthermore, this sequence can be
extended to a basis of $X$, say $\{x_{i}\}_{i\in I}$. This makes
a contradiction, according to Lemma \ref{lid}, because $0\in
cl_c(\{x_{i}\}_{i\in I}).$
\end{proof}

In every topological vector space, compact sets are closed and
bounded, while the converse is not necessarily true. A topological
vector space in which every closed and bounded set is
compact, is called a \textit{Hiene-Borel} space. Although it is
almost rare that an infinite-dimensional locally convex space be
a Hine-Borel space (for example, normed spaces), the following
result proves that $(X,\tau_c)$ is a Hine-Borel space.

\begin{theorem}\label{hine}
$(X,\tau_c)$ enjoys the Hine-Borel property. Moreover, every
$\tau_c-$compact set in $X$ lies in some finite-dimensional
subspace of $X.$
\end{theorem}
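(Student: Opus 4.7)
The plan is to establish the stronger statement that every $\tau_c$-bounded set in $X$ is contained in a finite-dimensional subspace, and then deduce both conclusions of the theorem from it. Since every compact set in a Hausdorff TVS is bounded and $(X,\tau_c)$ is such by Theorem \ref{111}, this stronger claim immediately yields the second assertion. For the Heine-Borel property itself, given a $\tau_c$-closed and $\tau_c$-bounded set $A$, the stronger claim provides a finite-dimensional subspace $Y$ with $A\subseteq Y$; the subspace topology inherited from $\tau_c$ is the unique Hausdorff TVS topology on $Y$, so $A$ is closed and bounded in $Y$ in the usual Euclidean sense, and the classical finite-dimensional Heine-Borel theorem gives compactness of $A$ in $Y$, hence in $X$.

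The heart of the proof is thus the bounded-implies-finite-dimensional claim, which I would prove by contradiction. Suppose $B$ is $\tau_c$-bounded but escapes every finite-dimensional subspace. Pick $x_1\in B\setminus\{0\}$; given linearly independent $x_1,\ldots,x_n\in B$, their span is finite-dimensional and does not contain $B$, so some $x_{n+1}\in B$ can be chosen outside this span. This produces a linearly independent sequence $\{x_n\}_{n\in\mathbb{N}}\subseteq B$. Extend $\{x_n\}_{n\in\mathbb{N}}$ to a Hamel basis $\{x_n\}_{n\in\mathbb{N}}\cup\{y_\alpha\}_{\alpha\in J}$ of $X$ and define a linear functional $f\in X^{'}$ by $f(x_n):=n$ and $f(y_\alpha):=0$. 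By Theorem \ref{int1}, $f$ is $\tau_c$-continuous, so $f(B)$ must be bounded in $\mathbb{R}$; but $f(B)\supseteq\{f(x_n):n\in\mathbb{N}\}=\mathbb{N}$ is unbounded, a contradiction.

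The main subtlety, really the engine driving the whole argument, is the identification $(X,\tau_c)^{*}=X^{'}$ from Theorem \ref{int1}. Without it, there would be no reason to expect an algebraically-defined functional such as $f$ to be $\tau_c$-continuous, and the contradiction with boundedness would not close. Everything else is routine: extracting the linearly independent sequence uses only that $B$ escapes every finite-dimensional subspace, the Hamel basis extension is Zorn's lemma, and the Heine-Borel deduction reduces to the classical finite-dimensional case via uniqueness of Hausdorff TVS topologies on finite-dimensional spaces.
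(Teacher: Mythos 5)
Your proof is correct, but the engine you use for the key step differs from the paper's. Both arguments reduce everything to the claim that a $\tau_c$-bounded set spans a finite-dimensional subspace, and both extract a linearly independent sequence from a putative counterexample and extend it to a Hamel basis; the divergence is in how the contradiction is closed. The paper takes the sequence $\{x_n\}\subseteq A$, observes that boundedness forces $\frac{x_n}{n}\longrightarrow 0$ in $\tau_c$, extends $\{\frac{x_n}{n}\}$ to a Hamel basis of $X$, and then contradicts Lemma \ref{lid}, which says that $0$ never lies in the $\tau_c$-closure of a Hamel basis (that lemma is the paper's workhorse, also used for non-metrizability). You instead invoke the duality $(X,\tau_c)^{*}=X^{'}$ from Theorem \ref{int1}: define $f\in X^{'}$ with $f(x_n)=n$ on the extended basis, note that a $\tau_c$-continuous linear functional must be bounded on a $\tau_c$-bounded set, and contradict $f(B)\supseteq\mathbb{N}$. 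Your route bypasses Lemma \ref{lid} and its convex absorbing-set construction entirely, at the cost of leaning on the dual-space identification plus the standard fact that continuous linear functionals carry bounded sets to bounded sets; it also cleanly isolates the statement ``$\tau_c$-bounded $\Rightarrow$ finite-dimensional span,'' from which the second assertion follows via boundedness of compact sets in a Hausdorff TVS. The remaining steps (uniqueness of the Hausdorff TVS topology on a finite-dimensional subspace, classical Heine--Borel there, compactness being intrinsic) match what the paper does implicitly and are sound.
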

\begin{proof} Let $A$ be a $\tau_c$-closed and $\tau_c$-bounded subset of
$X$. First we claim that the linear space $\langle A\rangle$,
i.e. the smallest linear subspace containing $A$, is
finite-dimensional. To see this, by indirect proof, assume that
$\langle A\rangle$ is an infinite-dimensional subspace of $X$;
then there exists a linear independent sequence
$\{x_{n}\}_{n=1}^{\infty}$  in $A$. Thus, the sequence $\{\frac{
x_{n}}{n}\}_{n=1}^{\infty}$ is a linear independent set in $X,$
and also $\{x_{n}\}_{n=1}^{\infty}$ is $\tau_c$-bounded. Hence,
$\frac{x_{n}}{n}\longrightarrow^{^{\hspace{-4mm}\tau_c}}0$.
Furthermore, $X$ has a basis $\{x_{i}\}_{i\in I}$, containing
$\{\frac{x_{n}}{n}\}_{n=1}^{\infty}.$ This makes a
contradiction, according to Lemma \ref{lid}, because $0\in
cl_c(\{x_{i}\}_{i\in I}).$  Hence, $A$ is a closed and bounded
set contained in a finite-dimensional space $\langle A\rangle$.
Therefore, by traditional Hine-Borel theorem in
finite-dimensional spaces, $A$ is a compact set in $\langle
A\rangle$, and hence $A$ is a $\tau_c$-compact set in $X.$
\end{proof}


One of the important methods to realize the behavior of a
topology defined on a nonempty set is to perceive (characterize)
the convergent sequences. Assume that a sequence
$\{x_n\}_{n=1}^{\infty}$ is $\tau_c$-convergent to some $x\in
X$ (i.e, $x_{n}\longrightarrow^{^{\hspace{-4mm}\tau_c}} x$). Thus,
$\{x_n\}_{n=1}^{\infty}\cup\{x\}$ is a $\tau_c$-compact set.
Therefore, by Theorem \ref{hine},
$\{x_{n}\}_{n=1}^{\infty}\cup\{x\}$ lies in a finite-dimensional
subspace of $X$. This shows that, convergent sequences of
$(X,\tau_c)$ are exactly norm-convergent sequences contained in
finite-dimensional subspaces of $X$. Hence, every sequence with
infinite-dimensional $span$ could not be convergent. So, the
convergence in $(X,\tau_c)$ is almost strict; however, this is
not surprising because \textit{strongest-topology} ($\tau_c$) contains more number
of open sets than any other topology which makes $X$ into a locally
convex space.


\section{Applications}

In this section, we address some important results in convex
analysis and optimization under topological vector spaces which
can be directly extended to real vector spaces, utilizing
the main results presented in the previous section. Some
of these results can be found in the literature with different
complicated proofs. Subsection
4.1 is devoted to some Hahn-Banach type separation theorems.

\subsection{Separation}

\begin{theorem} \label{sep1}
Assume that $A,B$ are two disjoint convex subsets of $X$, and
$cor(A)\neq \emptyset.$ Then there exist some $f\in
X^{'}\setminus \{0\}$ and some scalar $\alpha \in \mathbb{R}$
such that
\begin{equation}\label{0A1} f(a)\leq \alpha \leq  f(b),~~~\forall a\in A, b\in B.\end{equation}
Furthermore,
\begin{equation}\label{0B1} f(a)<\alpha,~\forall a \in cor(A).\end{equation}
\end{theorem}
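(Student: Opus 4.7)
Since all the heavy structural work on the core convex topology has already been done, my plan is to reduce this separation statement to the classical first geometric Hahn--Banach theorem applied inside the locally convex TVS $(X,\tau_c)$.

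First I would appeal to Theorem \ref{111} to know that $(X,\tau_c)$ is a Hausdorff locally convex TVS, and to Theorem \ref{int} to rewrite the hypothesis $cor(A)\neq\emptyset$ as $int_c(A)=cor(A)\neq\emptyset$. Then $int_c(A)$ is a nonempty, $\tau_c$-open, convex subset of $X$; and since $int_c(A)\subseteq A$ while $A\cap B=\emptyset$, it is disjoint from the convex set $B$.

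Next I would apply the standard geometric Hahn--Banach separation theorem (valid in any TVS in which one of the two disjoint convex sets has nonempty interior) to $int_c(A)$ and $B$: this produces a nonzero $\tau_c$-continuous linear functional $f$ and a scalar $\alpha\in\mathbb{R}$ with
\[
f(x)<\alpha\leq f(b),\quad \forall x\in int_c(A),\ \forall b\in B.
\]
Invoking Theorem \ref{int1}, the $\tau_c$-continuous functionals are exactly the algebraic dual, so $f\in X'\setminus\{0\}$. This immediately yields \eqref{0B1} together with the right-hand inequality of \eqref{0A1}.

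Finally, to upgrade the strict inequality on $int_c(A)$ to the (weak) inequality $f(a)\leq\alpha$ for every $a\in A$, I would use the standard TVS fact that a convex set with nonempty interior satisfies $A\subseteq cl_c(int_c(A))$ (for any $a\in A$ and any $x_0\in int_c(A)$, the half-open segment $(x_0,a]$ lies in $int_c(A)$, and $a$ is the $\tau_c$-limit of $ta+(1-t)x_0$ as $t\to 1^-$ by continuity of scalar multiplication). Then $\tau_c$-continuity of $f$ gives $f(a)\leq\alpha$, completing \eqref{0A1}. There is no real obstacle here: once Theorems \ref{111}, \ref{int}, and \ref{int1} are available, the vector-space separation theorem becomes a direct transcription of the TVS separation theorem; the only point that requires a line of care is extending the strict inequality from $int_c(A)$ to all of $A$ via the density argument above.
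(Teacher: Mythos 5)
Your proposal is correct and takes essentially the same route as the paper: the paper's proof of Theorem \ref{sep1} simply invokes the standard separation theorem in the TVS $(X,\tau_c)$ together with Theorems \ref{int} and \ref{int1}, exactly as you do, with your density step $A\subseteq cl_c(int_c(A))$ being a detail the cited TVS theorem already subsumes. (Minor notational slip only: the segment contained in $int_c(A)$ is $[x_0,a)$ rather than $(x_0,a]$, but your limit argument $ta+(1-t)x_0\to a$ is the right one.)
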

\begin{proof} Since $(X,\tau_c)$ is a TVS and $int_c(A)=cor(A)\neq
\emptyset$, by a standard separation theorem on topological
vector spaces (see \cite{rud}), there exists some
$f\in(X,\tau_c)^*$ and some scalar $\alpha\in\mathbb{R}$
satisfying (\ref{0A1}) and (\ref{0B1}). This completes the proof
according to Theorem \ref{int1}.\end{proof}

\begin{theorem}
Two disjoint convex sets $A,B\subseteq X$ are strongly separated
by some $f\in X^{'}$ if and only if there exists a convex
absorbing set $V$ in $X$ such that $(A+V)\cap B=\emptyset$.
\end{theorem}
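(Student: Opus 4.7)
The plan is to prove both directions by reducing to the separation theorem just established (Theorem~\ref{sep1}); the bridge in both cases is that a convex absorbing set supplies a $\tau_c$-neighborhood of the origin, hence the nonempty algebraic interior needed to invoke that theorem.

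For the ``only if'' direction, suppose $f \in X^{'}$ strongly separates $A$ and $B$, i.e.\ $\sup_{a\in A} f(a) < \inf_{b\in B} f(b)$; call the gap $2\varepsilon > 0$. I would take $V := \{x \in X : |f(x)| < \varepsilon\}$. This $V$ is convex as the preimage of a convex set under the linear functional $f$, and it is absorbing, since for any $x \in X$ one has $\lambda x \in V$ whenever $|\lambda|\,|f(x)| < \varepsilon$ (trivially when $f(x) = 0$). The disjointness $(A+V) \cap B = \emptyset$ then follows from the estimate
\[ f(a+v) < \sup f(A) + \varepsilon \;=\; \inf f(B) - \varepsilon < f(b), \qquad \forall\, a\in A,\ v\in V,\ b\in B. \]

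For the converse, suppose $V \subseteq X$ is convex and absorbing with $(A+V)\cap B = \emptyset$. A convex absorbing set automatically satisfies $0 \in cor(V)$: given any $x \in X$, pick $\lambda,\mu > 0$ with $\lambda x,\, -\mu x \in V$, observe $0$ lies on the segment joining them, and then by convexity sufficiently small scalar multiples of $\pm x$ lie in $V$. Consequently $A+V$ is convex, and for each $a \in A$ the set $a + cor(V)$ is a $\tau_c$-open neighborhood of $a$ contained in $A+V$, yielding $A \subseteq int_c(A+V) = cor(A+V)$ by Theorem~\ref{int}; in particular $cor(A+V) \neq \emptyset$. Applying Theorem~\ref{sep1} to the disjoint convex sets $A+V$ and $B$ produces $f \in X^{'}\setminus\{0\}$ and $\alpha \in \mathbb{R}$ with
\[ f(a) + f(v) \;\leq\; \alpha \;\leq\; f(b), \qquad \forall\, a\in A,\ v\in V,\ b\in B. \]
Fixing any $a_0 \in A$ gives $\sup_{v\in V} f(v) \leq \alpha - f(a_0) < \infty$, and since $f \neq 0$ there is $x_0$ with $f(x_0) > 0$; absorbance forces $\lambda x_0 \in V$ for small $\lambda > 0$, so $\varepsilon := \sup_{v\in V} f(v) > 0$. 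Therefore $\sup f(A) \leq \alpha - \varepsilon < \alpha \leq \inf f(B)$, which is strong separation.

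The only delicate step is manufacturing this strict gap $\varepsilon > 0$ in the converse direction: neither $f \neq 0$ nor the absorbance of $V$ suffices on its own, but together they force $\sup_V f$ to be strictly positive, and that is precisely what upgrades the weak inequality coming out of Theorem~\ref{sep1} into strong separation.
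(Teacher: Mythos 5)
Your proof is correct and follows the same route as the paper's: for the substantive direction it reduces to Theorem~\ref{sep1} applied to the disjoint convex sets $A+V$ and $B$, after observing that a convex absorbing set $V$ satisfies $0\in cor(V)$, so that $A+V$ is convex and solid. The paper's proof is only a two-line sketch of that direction, so your added details --- the construction $V=\{x\in X: |f(x)|<\varepsilon\}$ for the ``only if'' part and the extraction of the strict gap from $\sup_{v\in V}f(v)>0$ --- are precisely the steps it leaves to the reader, and they are all sound.
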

\begin{proof} It is easy to check that $V$ and then $A+V$ are convex solid
sets. Now it is enough to apply Theorem \ref{sep1} for two sets
$A+V$ and $B$.
\end{proof}

In the following theorem, we brought some new conditions on two
convex sets that guarantee their strongly separation by some
$f\in X^{'}$.

\begin{theorem}\label{sep2}
Suppose that $A,B\subseteq X$ are two disjoint convex sets such
that, $A$ is vectorial closed and relatively solid. If furthermore,
$B\subseteq Y$, where $Y$ is a finite-dimensional subspace of
$X$, and $B$ is compact in $Y$ (note that $Y$ is homeomorphic to
$ \mathbb{R}^{dim Y}$), then $A$ and $B$ can be strongly
separated by some $f\in X^{'}$.
\end{theorem}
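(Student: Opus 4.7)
The plan is to reduce the statement to the standard Hahn--Banach strong separation theorem for locally convex topological vector spaces applied to $(X,\tau_c)$, and then invoke Theorem \ref{int1} to identify $(X,\tau_c)^{*}$ with $X'$. In essence, the hypotheses on $A$ and $B$ are tailored precisely to give $A$ the role of a $\tau_c$-closed convex set and $B$ the role of a $\tau_c$-compact convex set in this classical setup.

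First, I would verify that $A$ is $\tau_c$-closed. Since $A$ is convex and relatively solid, Theorem \ref{vcl} gives $vcl(A) = cl_c(A)$, and combined with the assumption that $A$ is vectorially closed we get $A = vcl(A) = cl_c(A)$, i.e. $A$ is $\tau_c$-closed.

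Second, I would show that $B$ is $\tau_c$-compact. The subspace topology that $\tau_c$ induces on the finite-dimensional subspace $Y$ is a Hausdorff vector topology on $Y$; by the classical uniqueness of Hausdorff vector topologies on finite-dimensional spaces, it must coincide with the Euclidean topology of $Y\cong \mathbb{R}^{\dim Y}$. Hence $B$ being compact in $Y$ is the same as $B$ being compact in $(Y,\tau_c|_Y)$, which by transitivity makes $B$ a $\tau_c$-compact subset of $X$.

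Third, with $A$ a disjoint $\tau_c$-closed convex set, $B$ a $\tau_c$-compact convex set in the locally convex space $(X,\tau_c)$ (Theorem \ref{111}), the standard strong separation theorem in locally convex TVSs produces some $f\in (X,\tau_c)^{*}$ and scalars $\alpha<\beta$ with
$$f(a)\leq \alpha < \beta \leq f(b),\qquad \forall\,a\in A,\ b\in B.$$
By Theorem \ref{int1}, $f\in X'$, yielding the desired strong separation. The only non-routine step is justifying that $B$ is $\tau_c$-compact; once the uniqueness of the Hausdorff vector topology on the finite-dimensional $Y$ is invoked, the rest of the argument is a direct application of results already established in the paper.
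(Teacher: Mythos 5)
Your proposal is correct and follows essentially the same route as the paper: establish that $A$ is $\tau_c$-closed via Theorem \ref{vcl}, show $B$ is $\tau_c$-compact using the finite-dimensionality of $Y$ (the paper does this with an open-cover argument, implicitly using that $\tau_c$ restricted to $Y$ is the Euclidean topology, which you make explicit via uniqueness of Hausdorff vector topologies), and then apply the classical strong separation theorem in the locally convex space $(X,\tau_c)$ together with $(X,\tau_c)^{*}=X'$.
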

\begin{proof} Since $A$ is convex and relatively solid then, by Theorem \ref{vcl} we have
$A=vcl(A)=cl_c(A).$ Thus $A$ is $\tau_c$-closed. Let
$\{V_{i}\}_{i\in I}$ be a $\tau_c$-open cover of $B$ in $X$.
Then $\{V_{i}\cap Y\}_{i \in I} $ is a norm-open cover of $B$ in
$Y$. Since $B$ is compact in $Y$, $ \{ V_{i} \cap Y \}_{i \in I }
$ and then $\{ V_{i} \}_{i \in I }$ admit finite subcovers of
$B$. Hence, $B$ is $\tau_c-$compact in $X$. Now for completing
the proof, it is enough to apply the classic strong separation
theorem for closed convex set $A$ and convex compact set $B$ in
locally convex space $(X,\tau_c)$; see  \cite{rud}.
\end{proof}

\begin{theorem}
(Separation of cones). Let $M,K$ be two solid convex cones in $X$.
If $M\cap cor(K)=\emptyset$, then there exists a functional $f\in
X^{'} \backslash \{0\}$ such that,
$$f(m)\leq 0 \leq f(k)\qquad\quad\forall (k\in K , m \in M),$$
and furthermore,
$$f(k)> 0 \qquad\quad \forall k \in cor(K),$$
and
$$f(m)<0 \qquad\quad \forall m \in cor(M).$$
\end{theorem}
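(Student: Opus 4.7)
The plan is to reduce the statement to the weak separation theorem already established (Theorem \ref{sep1}), applied to the disjoint convex sets $M$ and $cor(K)$. Since $M$ is solid we have $cor(M)\neq\emptyset$, and by hypothesis $M\cap cor(K)=\emptyset$, so Theorem \ref{sep1} (with $A:=M$ and $B:=cor(K)$) yields some $f\in X'\setminus\{0\}$ and $\alpha\in\mathbb{R}$ with
$$f(m)\le\alpha\le f(k),\qquad\forall m\in M,\ \forall k\in cor(K),$$
together with the strict bound $f(m)<\alpha$ for every $m\in cor(M)$. What remains is to (i) identify $\alpha$ as $0$, (ii) upgrade $f\ge\alpha$ from $cor(K)$ to all of $K$, and (iii) recover the strict inequality $f(k)>0$ on $cor(K)$.

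For (i) and (ii) I would use the cone structure. Pick any $k_{0}\in cor(K)$ and $m_{0}\in cor(M)$. By Lemma \ref{00}(i), $cor(K)\cup\{0\}$ and $cor(M)\cup\{0\}$ are cones, so $\lambda k_{0}\in cor(K)$ and $\lambda m_{0}\in M$ for every $\lambda>0$, giving $\lambda f(k_{0})\ge\alpha$ and $\lambda f(m_{0})\le\alpha$; letting $\lambda\downarrow 0$ in both forces $\alpha\le 0\le\alpha$, hence $\alpha=0$. To transfer $f\ge 0$ from $cor(K)$ to $K$, I would exploit Lemma \ref{00}(ii): for any $k\in K$ and $t>0$, the vector $k+tk_{0}$ lies in $K+cor(K)=cor(K)$, so $f(k)+tf(k_{0})\ge 0$, and sending $t\downarrow 0$ gives $f(k)\ge 0$. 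Together with the symmetric inequality for $M$ this yields the global bound $f(m)\le 0\le f(k)$ on $M\times K$.

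The strict inequality $f(m)<0$ on $cor(M)$ is then immediate from the strict conclusion of Theorem \ref{sep1} combined with $\alpha=0$. For the dual strict inequality on $cor(K)$ I would argue by contradiction: suppose $f(k_{0})=0$ for some $k_{0}\in cor(K)$. Since $f\not\equiv 0$, pick $x\in X$ with $f(x)>0$ (replacing $x$ by $-x$ if necessary). The definition of the algebraic interior supplies $\lambda>0$ with $k_{0}-\lambda x\in K$, and then $f(k_{0}-\lambda x)=-\lambda f(x)<0$ contradicts the already-established $f\ge 0$ on $K$. The main obstacle is the one-sided nature of Theorem \ref{sep1}, which gives information only on $cor(K)$ and only a strict bound on the $M$-side; the cone identity $cor(K)+K=cor(K)$ of Lemma \ref{00}(ii) together with the short $f\ne 0$ argument are precisely what make the passage to $K$ and the strict bound on $cor(K)$ routine.
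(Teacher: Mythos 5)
Your argument is correct, and it is a genuinely different (and more self-contained) route than the paper's. The paper disposes of this theorem in one line: it views $M$ and $K$ inside the locally convex space $(X,\tau_c)$ and cites the standard separation theorem for cones in TVSs from Rudin, using Theorem \ref{111} (that $(X,\tau_c)$ is a locally convex TVS), Theorem \ref{int} ($int_c = cor$ for convex sets) and Theorem \ref{int1} ($(X,\tau_c)^{*}=X^{'}$) to translate the topological hypotheses and conclusions into algebraic ones; in particular all the cone-specific normalizations (taking the separating level to be $0$, passing from $cor(K)$ to $K$, strictness on both algebraic interiors) are left to the cited classical result. You instead stay entirely within the paper's own toolkit: you apply the already-proved Theorem \ref{sep1} to the disjoint convex pair $M$ and $cor(K)$ (a sensible choice, since $M$ and $K$ themselves need not be disjoint), then use the cone structure and Lemma \ref{00} explicitly --- positive homogeneity to force $\alpha=0$, the identity $K+cor(K)=cor(K)$ to extend $f\ge 0$ from $cor(K)$ to $K$, and a short $f\not\equiv 0$ contradiction argument to get strictness on $cor(K)$, with strictness on $cor(M)$ coming free from Theorem \ref{sep1}. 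Each step checks out (the application of Theorem \ref{sep1} is legitimate because $cor(M)\neq\emptyset$ and $cor(K)$ is convex and disjoint from $M$). What your approach buys is a complete, elementary derivation that makes the cone normalization visible and avoids any external citation; what the paper's approach buys is brevity, at the cost of burying exactly those details in the reference.
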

\begin{proof} This theorem results from Theorems \ref{111}, \ref{int}, and
\ref{int1} in the present paper and standard separation theorem
in TVSs (see \cite{rud}).
\end{proof}


The following result is a Sandwich Theorem between two functions in real vector spaces without topology.

\begin{theorem} \label{san} \textit{(Sandwich Theorem)}
Let $X$ be a real vector space and let $f,g : X \longrightarrow \mathbb{R}$ be convex and concave functions, respectively, such that $ g(x) \leq f(x)$ for each $x \in X $. Then there exist some $l\in X^{'}$ and $\alpha \in \mathbb{R}$ such that
$$g(x) \leq l(x) +  \alpha \leq  f(x),~~\forall x \in X.$$
\end{theorem}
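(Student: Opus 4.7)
The plan is to apply the separation theorem (Theorem \ref{sep1}) in the product space $X\times\mathbb{R}$ to the strict epigraph of $f$ and the hypograph of $g$. First I would set
$$A:=\{(x,t)\in X\times\mathbb{R}\,:\,f(x)<t\},\qquad B:=\{(x,t)\in X\times\mathbb{R}\,:\,t\le g(x)\}.$$
Convexity of $f$ makes $A$ convex, concavity of $g$ makes $B$ convex, and the hypothesis $g\le f$ forces $A\cap B=\emptyset$ (any point in the intersection would yield $f(x)<t\le g(x)\le f(x)$).

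The key technical step is to show $\mathrm{cor}(A)=A$, so that $A$ is algebraically solid in the vector space $X\times\mathbb{R}$. Fix $(x_{0},t_{0})\in A$ with $f(x_{0})<t_{0}$, and for an arbitrary direction $(y,s)\in X\times\mathbb{R}$ consider $\phi(\lambda):=f(x_{0}+\lambda y)-(t_{0}+\lambda s)$. This is a convex real-valued function of $\lambda\in\mathbb{R}$ satisfying $\phi(0)<0$; since a finite convex function on $\mathbb{R}$ is continuous, there exists $\lambda'>0$ with $\phi(\lambda)<0$ on $[0,\lambda']$, which places $(x_{0}+\lambda y,\,t_{0}+\lambda s)$ in $A$ for all such $\lambda$. (Alternatively one can invoke Lemma \ref{00}(iv) together with the continuity of $\lambda\mapsto x_{0}+\lambda y$ from $\mathbb{R}$ into $(X,\tau_{c})$.) This verifies that every point of $A$ lies in $\mathrm{cor}(A)$.

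Now Theorem \ref{sep1}, applied to the disjoint convex sets $A$ and $B$ in $X\times\mathbb{R}$, produces a nonzero functional in $(X\times\mathbb{R})^{'}\cong X^{'}\times\mathbb{R}$, which we write as $(x,t)\mapsto -l(x)+\gamma t$, together with a scalar $\beta\in\mathbb{R}$ such that
$$-l(x)+\gamma t\le\beta\le -l(x')+\gamma t'\quad\text{for all }(x,t)\in B,\;(x',t')\in A.$$
Letting $t\to-\infty$ in $B$ (and $t'\to+\infty$ in $A$) forces $\gamma\ge 0$; the possibility $\gamma=0$ would give $-l(x)=\beta$ for every $x\in X$, making the functional zero, which is excluded. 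Hence $\gamma>0$, and after normalizing to $\gamma=1$ I take $t'\downarrow f(x')$ on the right and $t=g(x)$ on the left to obtain
$$-l(x)+g(x)\le\beta\le -l(x)+f(x)\qquad\forall x\in X,$$
i.e.\ $g(x)\le l(x)+\beta\le f(x)$, which is the desired sandwich with $\alpha:=\beta$.

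The only non-routine part is the algebraic-interior calculation for $A$; once that is established, the conclusion is an essentially mechanical unpacking of the algebraic Hahn--Banach theorem already proved in the paper. Notice that Theorem \ref{int1} is implicitly used in Theorem \ref{sep1} to guarantee that the separating functional is a member of the full algebraic dual $X^{'}$, not a weaker class.
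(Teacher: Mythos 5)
Your proof is correct, but it follows a genuinely different route from the paper. The paper's proof is a two-line reduction: it notes that $f$ and $g$ are $\tau_c$-continuous by Lemma \ref{00}(iv) and then invokes the topological Sandwich Theorem in the locally convex space $(X,\tau_c)$ (citing Schirotzek), with Theorem \ref{int1} guaranteeing that the resulting continuous affine minorant/majorant comes from $X^{'}$. You instead give the classical epigraph argument inside the paper itself: you separate the strict epigraph of $f$ from the hypograph of $g$ in $X\times\mathbb{R}$ using the paper's own Theorem \ref{sep1}, after checking by a direct one-variable convexity computation that the strict epigraph is algebraically open (which, incidentally, is essentially the content hiding behind Lemma \ref{00}(iv)). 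Your sign analysis ruling out $\gamma<0$ and $\gamma=0$ and the passage to the limit $t'\downarrow f(x')$ are all sound; the only steps left implicit are the trivial nonemptiness of $A$ (needed so that $cor(A)\neq\emptyset$ in Theorem \ref{sep1}) and the observation that Theorem \ref{sep1} applies to the product space $X\times\mathbb{R}$, which is again a real vector space. What each approach buys: the paper's proof is shorter but leans on an external reference and on the unproved continuity lemma, whereas yours is self-contained modulo Theorem \ref{sep1} and makes the algebraic mechanism (openness of the epigraph in the core sense) explicit.
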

\begin{proof} Both functions $f,g$ are $\tau_c$-continuous by Lemma \ref{00}. Furthermore, these functions satisfy all assumptions of the Sandwich Theorem under TVS $(X, \tau_c)$ (see \cite[Page 14]{ws}). This completes the proof.\end{proof}


\begin{proposition}
Let $A$ be a convex and relatively solid set in $X$. Then \\
1. $ cor (A^c )  = int_c ( A^c ) $,\\
2. $\delta  A = vcl(A) \setminus cor(A)$, where $ \delta  A $ stands for the algebraic boundary of $A$.
\end{proposition}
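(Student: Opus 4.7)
The plan is to derive part 2 as a short consequence of part 1, so the real work lies in part 1. For part 1, the inclusion $int_c(A^c) \subseteq cor(A^c)$ is immediate from Lemma \ref{000}, since $(X,\tau_c)$ is a TVS. The converse is the content of the statement, and I would establish it by proving the set-theoretic identity $cor(A^c) \cap vcl(A) = \emptyset$; combined with Theorem \ref{vcl} (which gives $vcl(A) = cl_c(A)$, using that $A$ is convex and relatively solid), this would yield
\[
cor(A^c) \subseteq X \setminus vcl(A) = X \setminus cl_c(A) = int_c(X \setminus A) = int_c(A^c),
\]
as desired.

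To verify that $cor(A^c) \cap vcl(A) = \emptyset$, I would argue by contradiction. Suppose $b$ belongs to both. From $b \in vcl(A)$, unfolding the definition produces a direction $x \in X$ with the property that every interval $[0,\lambda']$ contains some $\lambda$ with $b + \lambda x \in A$. On the other hand, applying the definition of $cor(A^c)$ to the very same direction $x$ yields a $\delta > 0$ such that $b + \lambda x \in A^c$ for all $\lambda \in [0, \delta]$. Taking $\lambda' = \delta$ in the $vcl$ condition then forces the simultaneous membership $b + \lambda x \in A \cap A^c$ for some $\lambda \in [0, \delta]$, a contradiction. This is the crux of the argument; the rest of part 1 is just topological bookkeeping.

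For part 2, I would start from the definition of the algebraic boundary as $\delta A = X \setminus \bigl(cor(A) \cup cor(A^c)\bigr)$, which is what is recorded in the preliminaries. By part 1 together with Theorem \ref{vcl}, we have
\[
cor(A^c) = int_c(A^c) = X \setminus cl_c(A) = X \setminus vcl(A),
\]
so $X \setminus cor(A^c) = vcl(A)$. Substituting this into the boundary formula gives
\[
\delta A = \bigl(X \setminus cor(A)\bigr) \cap \bigl(X \setminus cor(A^c)\bigr) = vcl(A) \setminus cor(A),
\]
which is precisely the claim.

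The main obstacle is the disjointness $cor(A^c) \cap vcl(A) = \emptyset$ in part 1; once that is in hand, everything else is formal manipulation with the identities $X \setminus cl_c(\,\cdot\,) = int_c(X \setminus \,\cdot\,)$ and $vcl(A) = cl_c(A)$. Note that the relative solidness of $A$ is used only through Theorem \ref{vcl} (to identify $vcl(A)$ with $cl_c(A)$), while the disjointness argument itself does not require either convexity or relative solidness.
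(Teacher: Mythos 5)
Your proof is correct, and the shared core with the paper is the observation that a point of $cor(A^c)$ cannot lie in $vcl(A)$ (your contradiction argument with the direction $x$ and the threshold $\delta$ is a careful and accurate unfolding of the definitions, in fact more faithful to the stated definition of $vcl$ than the paper's own phrasing). Where you diverge is in how part 1 is finished: the paper, after establishing $x\notin vcl(A)$, invokes the strong separation result (Theorem \ref{sep2}) to produce a functional $f$ and an explicit open half-space $U=\{z: f(z)>\alpha\}$ containing $x$ and disjoint from $A$, whereas you bypass separation entirely by combining Theorem \ref{vcl} ($vcl(A)=cl_c(A)$ for convex relatively solid $A$) with the purely topological identity $X\setminus cl_c(A)=int_c(X\setminus A)$. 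Your route is shorter and more elementary, and it cleanly isolates where the hypotheses enter (convexity and relative solidness are used only through Theorem \ref{vcl}, while the disjointness step needs neither); it also avoids a small hygiene issue in the paper's argument, namely that Theorem \ref{sep2} formally requires a vectorially closed set, so it must really be applied to $vcl(A)$ rather than $A$ itself. The paper's separation route, on the other hand, yields the concrete half-space neighborhood, which is mildly more constructive. Your treatment of part 2 is essentially the paper's computation (complementing $cor(A)\cup cor(A^c)$ and substituting), only slightly more direct since you never need to pass through $int_c(A)=cor(A)$.
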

\begin{proof} 1. It is enough to prove $cor (A^c )  \subseteq int_c ( A^c ). $ To see this, suppose $x \in cor ( A^c ).$ We claim that $ x \notin vcl(A);$ otherwise there exist $d \in X$ and $ \overline{ \lambda } > 0 $ such that $ x + \lambda d \in A $ for each $\lambda  \in [0, \overline{ \lambda } ]$ which contradicts $x\in cor ( A^c ).$ Therefore, $ x \notin vcl(A).$ By Theorem \ref{sep2} in the present paper, there exist $ f \in X^{'}  \setminus \{0\} $  and  $ \alpha  >  0 $ such that
$$    f(a)  < \alpha  <  f(x)  \quad \quad \forall a \in A.          $$
Therefore $ U:= \{ z \in X :~ \alpha  <  f(z)  \}  $ is a convex and $ \tau_c -$open neighborhood of $ x $ satisfying $U \subseteq A^c .$ So $ x \in int_c ( A^c ). $

2.  According to definition of algebraic boundary of $A$, we have $ \delta A = (  cor (A^c) \cup cor(A) )^c$. Using Part 1, we obtain
$$ \delta A = ( int_c (A^c) \cup  int_c (A) )^c = [ int_c (A^c) ]^c \cap  [ int_c (A) ]^c  = cl_c (A)  \setminus int_c (A). $$
Now, using Theorems \ref{int} and \ref{vcl}, we conclude $\delta  A = vcl(A) \setminus cor(A). $
\end{proof}

We close this subsection by a nonconvex separation theorem in real
vector spaces. This theorem is a direct consequence of a
nonconvex separation theorem presented in \cite{ger}.

\begin{definition}
Let $D$ be a nonempty subset of $X.$ A real-valued function $g:X\longrightarrow \mathbb{R}$ is called\\
1) $D$-monotone if $x_2\in x_1+D$ implies $g(x_1)\leq g(x_2).$\\
2) strictly $D$-monotone, if $ x_2 \in x_1 + D \setminus \{0\} $
implies $ g(x_1)  <  g(x_2). $
\end{definition}

\begin{theorem}\label{sep3}
Suppose that the following conditions are satisfied:\\
$\Box$ $C$ is a proper convex and algebraic open subset of $X$;\\
$\Box$ There exists $k\in X$ such that $vcl(C)-\alpha k\subseteq
vcl(C)$ for each $\alpha>0;$\\
$\Box$ $X=\bigcup\{vcl(C)+\alpha k:~\alpha \in \mathbb{R}\}$;\\
$\Box$ $A \subset X$ is a nonempty subset of $X.$\\
Then we have:
\\
$(a)~ A \cap C = \emptyset $ if and only if there exists a convex
($\tau_c$-continuous) and onto function $ g: X \longrightarrow
\mathbb{R} $  such that
$$  g(A) \geq 0,~~  g( int_c A  )  > 0, ~~~~ g(C)  <  0,~~ g( \delta C )=0,              $$
where $ \delta  C $ stands for algebraic boundary of $C$, i.e.
$\delta  C = vcl(C) \setminus cor(C). $
\\
$(b)$ If $B\subset X$ and $\delta C-B\subset C,$ then the function
$g$ in part $(a)$ can be chosen such that it is also $B$-monotone.
\\
$(c)$ If $B\subset X$ and $\delta C-(B\setminus\{0\})\subset C,$
then one can construct function $g$ in part $(a)$ such that
it is also strictly $B$-monotone.
\\
$(d)$ If $\delta C+\delta C\subset vcl(C),$ then $g$ in part $(a)$
can be chosen such that it is subadditive.
\end{theorem}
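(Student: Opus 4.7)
The plan is to transplant the hypotheses of the theorem into the topological setting $(X,\tau_c)$ and then invoke the nonconvex separation theorem of \cite{ger} directly. All the conceptual work is performed by the dictionary built up in Section 3; no new analytic estimate is required.

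First I would recast the data on $C$. Since $C$ is convex and algebraic open, Lemma \ref{basis} places $C$ in $\mathfrak{B}$, so $C$ is $\tau_c$-open and Theorem \ref{int} gives $int_c(C)=cor(C)=C$. Because $C$ is proper and algebraic open, it is nonempty, hence solid, hence relatively solid, so Theorem \ref{vcl} yields $vcl(C)=cl_c(C)$. Combining the two identities, the algebraic boundary coincides with the topological one:
\[
\delta C = vcl(C)\setminus cor(C) = cl_c(C)\setminus int_c(C).
\]
The remaining two structural hypotheses rewrite verbatim as $cl_c(C)-\alpha k\subseteq cl_c(C)$ for every $\alpha>0$ and $X=\bigcup_{\alpha\in\mathbb{R}}(cl_c(C)+\alpha k)$, which are precisely the conditions needed to define the Gerstewitz--Tammer functional
\[
g(x) := \inf\{\,t\in\mathbb{R} : x\in tk+cl_c(C)\,\}
\]
on the locally convex TVS $(X,\tau_c)$.

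Second, I would apply the nonconvex separation theorem of \cite{ger} to $(X,\tau_c)$. That theorem asserts that $g$ is finite (by the covering condition), convex, $\tau_c$-continuous, and surjective onto $\mathbb{R}$, and satisfies $g(x)<0$ for $x\in C$, $g(x)=0$ for $x\in\delta C$, and $g(x)>0$ for $x\notin vcl(C)$. The hypothesis $A\cap C=\emptyset$ gives $A\subseteq X\setminus C$, whence $g(A)\geq 0$; the $\tau_c$-continuity of $g$ combined with the $\tau_c$-openness of $int_c(A)$ (which is open in $A$ and disjoint from $cl_c(C)$) promotes this to $g(int_c A)>0$ whenever $int_c A\neq\emptyset$. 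The converse direction of (a) is immediate, since $g(C)<0$ and $g(A)\geq 0$ together preclude $A\cap C\neq\emptyset$. This establishes part (a).

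Finally, parts (b), (c), (d) follow by reading off the corresponding monotonicity and subadditivity clauses of the theorem in \cite{ger} under the same translation: $\delta C - B\subset C$ yields $cl_c(C)-B\subset cl_c(C)$ and therefore $B$-monotonicity of $g$; $\delta C-(B\setminus\{0\})\subset C$ yields strict $B$-monotonicity; and $\delta C+\delta C\subset vcl(C)$ is exactly the condition that forces $g$ to be subadditive. The main (and essentially only) obstacle in the whole argument is to check that the Theorem \ref{vcl} equality $vcl(C)=cl_c(C)$ really applies, i.e., that relative solidity of $C$ is guaranteed; this is immediate from $C=cor(C)$ together with $C\neq\emptyset$. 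Once the dictionary is in place, the proof is a one-line application of the theorem of \cite{ger}.
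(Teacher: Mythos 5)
Your proposal is correct and follows essentially the same route as the paper: the paper's proof is exactly the observation that $(X,\tau_c)$ is a (locally convex) TVS with $int_c(C)=cor(C)$ and $cl_c(C)=vcl(C)$ (Theorems \ref{int} and \ref{vcl}), after which \cite[Theorem 2.2]{ger} yields all four parts. You merely spell out the details the paper leaves implicit, in particular the check that $C=cor(C)\neq\emptyset$ makes $C$ relatively solid so that Theorem \ref{vcl} applies, and the explicit Gerstewitz-type functional behind the cited result.
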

\begin{proof} The desired results are obtained by \cite[Theorem 2.2]{ger},
because $(X,\tau_c)$ is a TVS, $int_c(C)=cor(C),$ and
$cl_c(C)=vcl(C)$.
\end{proof}

\subsection{Vector optimization}

In this subsection, we address some results in vector
optimization under real vector spaces, to show that
various important results existing in the literature can be
extended from TVSs to real vector spaces utilizing the main
results of the present paper.

Let $Z,X$ be two real vector spaces such that $X$ is
partially ordered by a nontrivial ordering convex cone $K$.
Notice that $0\notin cor(K)$ because $K\neq Y$ ($K$ is
nontrivial). Consider the following vector optimization problem:
\begin{center}
(VOP) $~~~~~~~~K-Min\: \{f(x): x\in \Omega \},$
\end{center}
where $\Omega\subseteq Z$ is a nonempty set. Here, $\Omega$ is the feasible set and $f:Z\to X$ is the objective function.

We say that a nonempty set $B\subseteq X$ is a base of the cone
$K\subseteq X$ if $0\notin B$ and for each $k\in K\backslash
\{0\}$ there are unique $b\in B$ and unique $t>0$ such that
$k=tb.$

Throughout this section, we assume that $K$ is a nontrivial convex ordering cone with a convex base $B$.
It is not difficult to show that convex ordering cone $K$ is
pointed if it has a convex base. Also, it can be shown that $0\notin vcl(B)$. Hence, the considered ordering
cone $K$ is nontrivial, convex and pointed.

Recall that a set $A\subseteq X$ is called algebraic open if
$cor(A)=A$. Also, it is vectorially closed if $vcl(A)=A$.

\begin{definition} A feasible solution $x_{0} \in \Omega$ is called an
efficient (EFF) solution of (VOP) with respect to $K$ if
$\big(f(\Omega)-f(x_{0})\big)\cap(-K)=\{0\}.$\end{definition}

The following definition extends  weak efficiency notion from
topological vector spaces to vector spaces; see
\cite{ada-2}.

\begin{definition} Assuming $core (K)\neq\emptyset$,
the feasible solution $x_{0} \in \Omega$ is called a vectorial
weakly efficient (VWEFF) solution of (VOP) with respect to $K$ if
$\big(f(\Omega)-f(x_{0})\big)\cap(-cor(K))=\emptyset.$\end{definition}

One of the solution concepts which plays an important role in
vector optimization, from both theoretical and practical points
of view, is the proper efficiency notion \cite{bot}. This concept
has been introduced to eliminate the efficient solutions with
unbounded trade offs \cite{geo}. There are different definitions
for proper efficiency in the literature; see e.g.
\cite{ben,bor,bot,geo,luc-mor,gue,hur,jah,luc,isiam} and the references
therein. Definition \ref{3.3} extends the concept of proper
efficiency given by Guerraggio and Luc \cite{luc-mor}; see \cite[Definition 2.1]{luc-mor}.

\begin{definition}\label{3.3} $x_{0} \in \Omega$ is called a
vectorial proper (VP) solution of (VOP) if there exists a convex
algebraic open set $V$ containing zero such that $cone(B+V)\neq
X$ and $x_0$ is efficient with respect to
$cone(B+V)$.\end{definition}

In the following, we extend proper efficiency notion in the
Henig's sense under vector spaces.

\begin{definition}$x_{0} \in \Omega$ is called a
vectorial Henig (VH) solution of (VOP) if there exists an ordering
pointed convex cone $C$ such that $K \backslash \{0\} \subseteq
cor(C)$ and $x_0$ is efficient with respect to
$C$.\end{definition}

Two of the most important definitions of proper efficiency have
been given by Hurwicz \cite{hur} and Benson \cite{ben}. The
following definition extends these notions under vector
spaces.

\begin{definition} \cite{ada-4} $x_{0} \in \Omega$ is called a
Hurwicz vectorial (HuV) proper efficient solution of (VOP) if
\begin{center}
$vcl\biggl (conv \biggl(cone\biggl ((f(\Omega)- f(x_{0}))\cup
K \biggr )\biggr )\biggr ) \cap (-K)=\{0\};$
\end{center}
and $x_{0}\in \Omega$ is called a Benson vectorial (BeV) proper
efficient solution of (VOP) if
$$vcl\biggl (cone\biggl (f(\Omega)- f(x_{0})+ K\biggr )
\biggr ) \cap (-K)=\{0\}.$$
\end{definition}

The following result gives a connection between the above-defined
notions. Since $(X,\tau_c)$ is a TVS, the following theorem
results from corresponding results in topological vectors spaces
(see e.g. \cite{bot,gue}) and the results of the present paper.

\begin{theorem}
(i) Each BeV proper efficient solution is
an efficient solution.\\
(ii) Each HuV
proper efficient solution is a BeV proper efficient solution. The converse holds under the convexity of $f(\Omega)+K$.\\
(iii) Each VP solution is a VH solution.\\
(iv) Each VH efficient solution is a BeV proper efficient
solution.\end{theorem}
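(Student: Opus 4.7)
My plan is to lift every statement to the topological vector space $(X,\tau_c)$ using the dictionary established earlier: $cor(A)=int_c(A)$ for convex $A$ (Theorem \ref{int}), $vcl(A)=cl_c(A)$ for convex relatively solid $A$ (Theorem \ref{vcl}), and $(X,\tau_c)^{*}=X'$ (Theorem \ref{int1}). With this dictionary in hand, parts (i)--(iv) become transcriptions of the classical relationships between the corresponding efficiency notions in TVSs, for which one may simply cite \cite{bot,gue}. The work therefore reduces to a few set-theoretic checks and one delicate cone construction.

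For part (i), the chain
\[ f(\Omega)-f(x_0)\ \subseteq\ cone(f(\Omega)-f(x_0)+K)\ \subseteq\ vcl\bigl(cone(f(\Omega)-f(x_0)+K)\bigr), \]
valid because $0\in K$, immediately reduces the BeV hypothesis to $(f(\Omega)-f(x_0))\cap(-K)\subseteq\{0\}$, while the reverse inclusion is trivial via $x=x_0$. For part (ii), I would observe that $conv(cone((f(\Omega)-f(x_0))\cup K))$ is a convex cone containing both $f(\Omega)-f(x_0)$ and $K$, hence is closed under addition and contains $cone(f(\Omega)-f(x_0)+K)$; monotonicity of $vcl$ yields the forward direction. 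Under convexity of $f(\Omega)+K$, the set $cone(f(\Omega)-f(x_0)+K)$ is itself already a convex cone containing $(f(\Omega)-f(x_0))\cup K$, so the two cones coincide and the converse follows.

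For part (iii), given a VP solution with dilation open convex set $V\ni 0$, I set $C:=cone(B+V)$. The inclusion $K\setminus\{0\}\subseteq cor(C)$ is direct: for any $tb\in K\setminus\{0\}$ (with $t>0,\ b\in B$) and any $u\in X$, algebraic openness of $V$ at $0$ gives $\delta>0$ with $\lambda u/t\in V$ for every $\lambda\in[0,\delta]$, so $tb+\lambda u=t(b+\lambda u/t)\in t(B+V)\subseteq C$. The main obstacle is pointedness of $C$: using $0\notin vcl(B)$ (noted in the paper) one may shrink $V$ to a convex symmetric algebraic open $V'\subseteq V$ with $0\notin B+V'$, then apply Theorem \ref{sep1} to separate $0$ from the convex $\tau_c$-open set $B+V'$ by some $\varphi\in X'\setminus\{0\}$ with $\varphi>0$ on $B+V'$; this functional then forces $cone(B+V')\cap(-cone(B+V'))=\{0\}$, i.e.\ pointedness. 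Shrinking $V$ preserves efficiency, since $cone(B+V')\subseteq cone(B+V)$, so the smaller cone serves as the required VH witness.

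For part (iv) I argue by contradiction: suppose $-k\in vcl(cone(f(\Omega)-f(x_0)+K))\cap(-K)$ with $k\neq 0$. Since $\tau_c$ is a vector topology, $vcl\subseteq cl_c$, and since $k\in K\setminus\{0\}\subseteq cor(C)=int_c(C)$ by Theorem \ref{int}, the set $int_c(-C)$ is a $\tau_c$-open neighborhood of $-k$. Hence $int_c(-C)$ meets $cone(f(\Omega)-f(x_0)+K)$, producing $\lambda>0$, $x\in\Omega$, $\kappa\in K$ with $\lambda(f(x)-f(x_0)+\kappa)\in -C$. Since $K\subseteq C$ and $C+C=C$, this forces $f(x)-f(x_0)\in -C$, and the VH efficiency of $x_0$ with respect to $C$ gives $f(x)=f(x_0)$. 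Therefore $\lambda\kappa\in int_c(-C)\cap C$, and pointedness yields $\lambda\kappa=0$; but then $0\in int_c(-C)$, which by the cone structure forces $-C=X$, contradicting pointedness of $C$. Together with the trivial observation that $0$ always lies in the relevant intersection, this delivers the BeV equality.
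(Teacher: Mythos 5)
Your parts (i) and (ii) are correct and essentially the paper's own (terser) argument; in fact your displayed inclusion in (ii) is the corrected form of the one the paper writes (the paper drops the ``$+K$'' inside the cone, evidently a typo), and your observation that under convexity of $f(\Omega)+K$ the set $cone(f(\Omega)-f(x_0)+K)$ is already a convex cone containing $(f(\Omega)-f(x_0))\cup K$ is exactly what makes the converse work. Your part (iv) is also correct, but it is a genuinely different route: where the paper simply cites \cite[Proposition 2.4.11]{bot} together with Theorem \ref{int} and $vcl(\cdot)\subseteq cl_c(\cdot)$, you give a self-contained contradiction argument using $-k\in cl_c(cone(f(\Omega)-f(x_0)+K))$, the neighborhood $int_c(-C)=-cor(C)$, and pointedness of the Henig cone; this buys independence from the external reference at no extra cost. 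In part (iii) your computation showing $K\setminus\{0\}\subseteq cor(cone(B+V))$ is the same as the paper's, and you correctly notice something the paper's proof silently skips: the VH definition requires the dilating cone to be \emph{pointed}.

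The genuine gap is in how you establish that pointedness. You claim that $0\notin vcl(B)$ allows you to shrink $V$ to a symmetric convex algebraically open $V'\subseteq V$ with $0\notin B+V'$, i.e.\ $V'\cap B=\emptyset$. But $0\notin vcl(B)$ is only a radial condition (along each line through the origin some initial segment misses $B$); it does not produce a $\tau_c$-neighborhood of $0$ disjoint from $B$ --- that would amount to $0\notin cl_c(B)$, and the identification $vcl(B)=cl_c(B)$ is only available when $B$ is relatively solid (Theorem \ref{vcl}), which is not assumed here; this is precisely why the paper's lemma on $K^{ss}\neq\emptyset$ carries the relative solidness hypothesis. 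So that shrinking step is unjustified as written. Fortunately no shrinking is needed: $B+V=\bigcup_{b\in B}(b+V)$ is algebraically open, so if $0\in B+V$ then $B+V$ would be absorbing (Lemma \ref{p}) and $cone(B+V)=X$, contradicting the VP requirement $cone(B+V)\neq X$; hence $0\notin B+V$, and Theorem \ref{sep1} applied to the disjoint convex sets $\{0\}$ and the solid set $B+V$ gives $\varphi\in X'\setminus\{0\}$ with $\varphi>0$ on $B+V$, hence $\varphi>0$ on $cone(B+V)\setminus\{0\}$, which is pointedness of $C=cone(B+V)$ itself. With that replacement your part (iii) is complete, and indeed more careful than the paper's own proof.
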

\begin{proof} Part (i) is clear. For each $x_0\in \Omega$, the inclusion
$$cone\biggl(f(\Omega)- f(x_{0})\biggl) \subseteq conv \biggl(cone\biggl ((f(\Omega)- f(x_{0}))\cup
K \biggr )\biggr )$$
always holds and two sets are equal if $f(\Omega)+K$ is convex. This proves part (ii).

To prove part (iii), let $x_0$ be a VP solution. Then there exists a convex set
$V$ containing zero such that $cor(V)=V$ and $cone(B+V)\neq X$.
Furthermore, $x_0$ is efficient with respect to $cone(B+V)$.
Setting $C:=cone(B+V)$, the set $C$ is a convex cone. Considering
nonzero $k\in K$, there exist scalar $\lambda>0$ and vector $b\in
B$ such that $k=\lambda b$. Since $V$ is convex and $0\in
V=cor(V)$, for each $y\in X$ there exists $t^{'}>0$ such that
$\frac{t}{\lambda}y\in V$ for each $t\in [0,t^{'}]$. Hence,
$$k=\lambda b=\lambda(b+0)\in cone (B+V)=C$$
and
$$k+ty=\lambda(b+\frac{t}{\lambda}y)\in cone(B+V)=C,~~\forall t\in [0,t^{'}].$$
These imply $k\in cor(C)$. Therefore, $K\backslash \{0\}\subseteq
cor(C)$ and $x_0$ is efficient with respect to $C$. Hence, $x_0$
is a VH solution.

Part (iv) results from \cite[Proposition 2.4.11]{bot},
Theorem \ref{int}, and $vcl(.)\subseteq cl_c(.)$.
\end{proof}


Now, we use some scalarization problems. Given the convex base
$B$ of $K$, we define $$K^{ss}:=\{l\in X^{'}: \inf_{b\in B}
\langle l,b\rangle >0\}.$$ This set has been studied in some
papers including \cite{luc-mor}. The following lemma provides a sufficient condition for nonemptiness of $K^{ss}$.

\begin{lemma}
$K^{ss}\neq \emptyset$ when $K$ has a convex and relatively solid base.
\end{lemma}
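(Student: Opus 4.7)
The plan is to produce the desired functional by a strong separation of the origin from $B$ inside $(X,\tau_c)$. I would start from the fact, stated just before the lemma, that $0\notin vcl(B)$. Since $B$ is convex and relatively solid, Theorem \ref{vcl} gives $vcl(B)=cl_c(B)$, so the origin is already $\tau_c$-separated from the $\tau_c$-closed convex set $cl_c(B)$.

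Next I would invoke the strong separation result already established in the paper, namely Theorem \ref{sep2}, applied with ``$A$'' taken to be $cl_c(B)$ and ``$B$'' in the notation of that theorem taken to be the singleton $\{0\}$. One has to check its hypotheses: $cl_c(B)$ is vectorially closed by Theorem \ref{vcl}, and relatively solid because $icr(B)\ne\emptyset$ and $icr(B)\subseteq icr(vcl(B))$; the singleton $\{0\}$ is trivially contained in the finite-dimensional subspace $\{0\}$ and compact there. The theorem then yields some $f\in X'\setminus\{0\}$ and scalars realizing a strong separation, i.e.\ there exists $\alpha>0$ such that
\[
f(0)=0<\alpha\le f(y)\qquad\text{for every }y\in cl_c(B).
\]
In particular $\inf_{b\in B}\langle f,b\rangle\ge\alpha>0$, so $f\in K^{ss}$ and the set is nonempty.

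The only non-routine point is verifying the hypotheses of Theorem \ref{sep2}; everything after that is essentially bookkeeping. The potentially delicate step is ensuring $cl_c(B)$ is relatively solid, but this follows from the monotonicity $icr(B)\subseteq icr(vcl(B))=icr(cl_c(B))$ together with the assumption that $icr(B)\ne\emptyset$. If one preferred a more self-contained route, one could alternatively apply Theorem \ref{sep1} to the disjoint convex sets $\{0\}$ and $icr(B)$ (which is solid in its affine hull) and then argue, using Theorem \ref{int} and the strict inequality~(\ref{0B1}) together with $0\notin vcl(B)=cl_c(B)$, that the separating functional is in fact bounded away from zero on all of $B$. Either way the conclusion $K^{ss}\ne\emptyset$ follows immediately.
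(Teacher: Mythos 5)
Your proof is correct and follows essentially the same route as the paper: the paper also separates $vcl(B)$ (vectorially closed, convex, relatively solid) from the compact singleton $\{0\}$ via Theorem \ref{sep2}, using $0\notin vcl(B)$, and concludes $\inf_{b\in B}\langle l,b\rangle\ge\alpha>0$. Your extra verification of the hypotheses of Theorem \ref{sep2} (in particular the relative solidity of $vcl(B)$) is a harmless refinement of the same argument.
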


\begin{proof} Let $B$ be a convex and relatively solid base of $K$. Then $ 0 \notin vcl(B) $, and thus  by Theorem \ref{sep2},  there is a nonzero linear functional $l \in X^{'}$ and  $\alpha  >  0$ such that $ \langle l,0 \rangle  <   \alpha  <  \langle l,x  \rangle $ for  every  $x \in vcl(B) $. Therefore, $\inf_{b\in B}  \langle l,b\rangle   \geq \alpha >0  $ , which means $ l \in K^{ss}.$\end{proof}

 Now considering a
linear functional $l$ belong to $K^{+},~K^{+s}$, or $K^{ss}$, we
define the following auxiliary scalarization problem:

\begin{equation}\label{auxi}\min_{x\in \Omega} \langle l,f(x)\rangle.\end{equation}

The set of optimal solutions of Problem (\ref{auxi}) is denoted
by $O_{l}$. In fact, Problem (\ref{auxi}) comes from the weighted
sum scalarization method which is a popular technique in
multiobjective optimization \cite{jah}. Now, we define some
optimal sets as follows (see also \cite{bot,luc-mor}).
$$O^+:=\{x_0\in \Omega: \exists l\in K^{+}\textmd{ s.t. } x_0\in O_{l}\},$$
$$O^s:=\{x_0\in \Omega: \exists l\in K^{+s}\textmd{ s.t. } x_0\in O_{l}\},$$
$$O^{ss}:=\{x_0\in \Omega: \exists l\in K^{ss}\textmd{ s.t. } x_0\in O_{l}\}.$$
It is clear that $O^{ss}\subseteq O^s\subseteq O^+$ and the members of $O^s$ are efficient with respect to $K$.
The following result provides a sufficient condition for VH solutions utilizing $O^s$.

\begin{theorem}\label{35}If $x_{0} \in O^s$, then $x_0$ is a VH solution.\end{theorem}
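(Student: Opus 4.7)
The plan is to scalarize via the functional $l$ and then construct an enlargement of $K$ whose algebraic interior absorbs $K\setminus\{0\}$ and with respect to which $x_0$ remains efficient. Given $x_0\in O^s$, fix $l\in K^{+s}$ with $x_0\in O_l$, so that
$$\langle l, f(x)-f(x_0)\rangle \geq 0 \quad\text{for every } x\in\Omega.$$
Since $K$ is nontrivial, the strict positivity of $l$ on $K\setminus\{0\}$ forces $l\neq 0$. I would then take as candidate cone
$$C := \{x\in X:\langle l,x\rangle>0\}\cup\{0\}.$$

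First I would dispatch the easy structural checks: $C$ is closed under nonnegative scaling, convexity follows from linearity of $\langle l,\cdot\rangle$, and pointedness $C\cap(-C)=\{0\}$ is immediate because $\langle l,x\rangle>0$ and $\langle l,-x\rangle>0$ cannot hold simultaneously. The slightly more delicate step is to compute $cor(C)$ directly from its definition and show
$$cor(C)=\{x\in X:\langle l,x\rangle>0\}.$$
The inclusion $\supseteq$ uses that for every $x$ with $\langle l,x\rangle>0$ and every direction $d\in X$, the scalar $\langle l,x\rangle+\lambda\langle l,d\rangle$ stays positive on a sufficiently small right-neighbourhood of $0$. The reverse inclusion amounts to observing that $0\notin cor(C)$, because some $d_0$ with $\langle l,d_0\rangle<0$ exists (as $l\neq 0$) and ejects $0$ from $C$ along $d_0$ for every $\lambda>0$. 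Combined with $l\in K^{+s}$, this characterization yields $K\setminus\{0\}\subseteq cor(C)$ immediately.

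It remains to verify efficiency of $x_0$ with respect to $C$. If $y\in(f(\Omega)-f(x_0))\cap(-C)$, write $y=f(x)-f(x_0)$ for some $x\in\Omega$; then either $y=0$, or $-y\in C\setminus\{0\}$ so that $\langle l,y\rangle<0$. The latter contradicts $x_0\in O_l$, hence $(f(\Omega)-f(x_0))\cap(-C)=\{0\}$, and $C$ witnesses that $x_0$ is a VH solution.

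No step is genuinely hard; the main subtlety is the algebraic-interior computation, where one must insist on the \emph{strict} inequality in the definition of $C$. Using the closed half-space $\{\langle l,\cdot\rangle\geq 0\}$ instead would destroy pointedness, while putting a nonstrict inequality inside $cor(C)$ would falsify the interior characterization; the ``open half-space plus origin'' shape of $C$ is exactly what reconciles pointedness with the required inclusion $K\setminus\{0\}\subseteq cor(C)$.
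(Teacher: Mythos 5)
Your proof is correct, but it follows a genuinely different route from the paper. The paper disposes of this theorem in one line: since $(X,\tau_c)$ is a locally convex TVS with $(X,\tau_c)^{*}=X^{'}$ and $int_c=cor$ on convex sets, it simply invokes the known TVS result \cite[Proposition 2.4.15]{bot} together with Theorems \ref{int} and \ref{int1}, in keeping with the paper's theme of transferring topological results to bare vector spaces via $\tau_c$. You instead give a direct algebraic construction: taking $l\in K^{+s}$ with $x_0\in O_l$ and the dilating cone $C=\{x\in X:\langle l,x\rangle>0\}\cup\{0\}$, you verify by hand that $C$ is a nontrivial pointed convex (hence ordering) cone, that $cor(C)=\{x:\langle l,x\rangle>0\}$ (the only delicate point, which you handle correctly, including $0\notin cor(C)$), that $l\in K^{+s}$ gives $K\setminus\{0\}\subseteq cor(C)$, and that minimality of $\langle l,f(\cdot)\rangle$ at $x_0$ forces $\big(f(\Omega)-f(x_0)\big)\cap(-C)=\{0\}$. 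Your argument needs no topology at all and no external reference, and it even makes visible the concrete Henig cone (an open half-space plus the origin) witnessing the conclusion; what it does not do is illustrate the transfer mechanism through the core convex topology, which is the point the paper is making in this section. Both proofs are sound; yours is the more elementary and self-contained, the paper's the shorter and more in the spirit of its general program.
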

\begin{proof} Since $(X,\tau_c)$ is a TVS, this theorem results from
\cite[Proposition 2.4.15]{bot} and Theorems \ref{int} and
\ref{int1} in the present paper.
\end{proof}

Now, we state a theorem which has been proved by Adan and Novo
\cite{ada-3}. This theorem will be useful for getting an
important corollary.
\begin{theorem}\label{36} Let $K$ be solid and vectorially closed. Let $x_{0}\in \Omega$ and
$vcl\bigg(cone\big(f(\Omega)-f(x_{0})\big)+K\bigg)$ be convex. If
$x_{0}$ is a BeV solution to (VOP), then $x_{0}\in
O^s$.\end{theorem}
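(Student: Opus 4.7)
The plan is to reduce this to the classical Benson-type proper-efficiency theorem in the locally convex TVS $(X,\tau_c)$, exploiting the dictionary between algebraic and $\tau_c$-topological notions established earlier in the paper. Specifically, I would translate each hypothesis into its $\tau_c$-counterpart and then quote the Benson scalarization theorem in TVSs, whose conclusion (a continuous linear functional strictly positive on $K\setminus\{0\}$) converts back to a member of $K^{+s}$ by Theorem \ref{int1}.

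First I would rewrite the data topologically. Because $K$ is solid and convex, Theorem \ref{int} gives $cor(K)=int_c(K)$, so $K$ is a $\tau_c$-solid convex cone; in particular $K$ is relatively solid, and since $K$ is vectorially closed, Theorem \ref{vcl} yields $K=vcl(K)=cl_c(K)$, so $K$ is $\tau_c$-closed. Next, denote $C:=cone\bigl(f(\Omega)-f(x_0)\bigr)+K$, a convex cone by the convexity assumption on $vcl(C)$ (after passing to the closure, but we in fact only need relative solidity of $C$, which follows from $cor(K)\neq\emptyset\subseteq cor(C)$). Hence $vcl(C)=cl_c(C)$ by Theorem \ref{vcl}, so the BeV hypothesis becomes
\[
cl_c(C)\cap(-K)=\{0\}.
\]
This is the classical Benson proper-efficiency condition in the TVS $(X,\tau_c)$.

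Now I would apply the standard TVS Benson scalarization theorem (see e.g. the corresponding result in \cite{bot,luc-mor}, whose proof is a Hahn--Banach separation of the $\tau_c$-closed convex set $cl_c(C)$ from the $\tau_c$-solid convex cone $-K$). It furnishes a continuous linear functional $l\in(X,\tau_c)^{*}$ which is strictly positive on $K\setminus\{0\}$ and satisfies $\langle l,c\rangle\geq 0$ for every $c\in C$. By Theorem \ref{int1} we have $(X,\tau_c)^{*}=X'$, so $l\in X'$; the strict positivity on $K\setminus\{0\}$ means $l\in K^{+s}$. Finally, for any $x\in\Omega$ the vector $f(x)-f(x_0)$ lies in $C$ (write it as $(f(x)-f(x_0))+0\in cone(f(\Omega)-f(x_0))+K$), so $\langle l,f(x)\rangle\geq\langle l,f(x_0)\rangle$, which means $x_0\in O_l\subseteq O^s$.

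The main obstacle is bookkeeping rather than depth: one has to make sure the convexity assumption on $vcl\bigl(cone(f(\Omega)-f(x_0))+K\bigr)$ is exactly what is needed to invoke the convex separation in $(X,\tau_c)$, and to verify that the strict positivity of the separating functional on $cor(K)$ upgrades to strict positivity on all of $K\setminus\{0\}$ (here one uses that $K\setminus\{0\}\subseteq cor(K)+K$ together with $l(cor(K))>0$ and $l(K)\geq 0$, which is the step most sensitive to the solidity of $K$). Once these are checked, the passage from $(X,\tau_c)$-continuous linear functionals to algebraic linear functionals is immediate by Theorem \ref{int1}.
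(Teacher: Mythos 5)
First, note that the paper itself offers no proof of this statement: Theorem \ref{36} is quoted as a result ``proved by Adan and Novo \cite{ada-3}'', so your argument has to stand entirely on its own, and as written it has a genuine gap at the crucial point, namely the production of $l\in K^{+s}$. A Hahn--Banach separation of the convex cone $cl_c(C)$ from $-K$ (two cones meeting at $0$, so no strong separation is possible) yields only $l\in K^{+}\setminus\{0\}$ with $l\geq 0$ on $C$ and $l>0$ on $cor(K)$; it does not yield strict positivity on all of $K\setminus\{0\}$. Your proposed upgrade rests on the inclusion $K\setminus\{0\}\subseteq cor(K)+K$, but by Lemma \ref{00}(ii) one has $cor(K)+K=cor(K)$, so this inclusion would assert that every nonzero point of $K$ is a core point, which fails for any solid cone with nontrivial algebraic boundary (e.g.\ $K=\mathbb{R}^2_+$). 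The inference itself is also false: for $X=\mathbb{R}^2$, $K=\mathbb{R}^2_+$ and $l(x,y)=x$ one has $l>0$ on $cor(K)$ and $l\geq 0$ on $K$, yet $l$ vanishes on the ray $\{0\}\times[0,\infty)\subseteq K\setminus\{0\}$, so $l\notin K^{+s}$. The ``standard TVS Benson scalarization theorem'' you invoke obtains a functional in the quasi-interior of the dual cone by strongly separating a compact (or closed bounded) base of $K$ from the closed cone; no such base hypothesis is verified here, and inside $(X,\tau_c)$ it is essentially unavailable in infinite dimensions, since by Theorem \ref{hine} a $\tau_c$-compact base would force $K$ to lie in a finite-dimensional subspace. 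So the reduction to a quotable TVS theorem does not go through as stated; one has to use the section's standing assumption that $K$ has a convex base together with the specific algebraic argument of \cite{ada-3} (or a strong-separation argument in the spirit of Theorem \ref{sep2}, which again needs extra compactness/finite-dimensionality hypotheses).

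There are also two smaller bookkeeping gaps. The hypothesis gives convexity of $vcl\bigl(cone(f(\Omega)-f(x_0))+K\bigr)$, not of $C=cone(f(\Omega)-f(x_0))+K$ itself, so Theorem \ref{vcl} cannot be applied to $C$ to conclude $vcl(C)=cl_c(C)$; you would have to work with the convex set $vcl(C)$ and justify separately that it is $\tau_c$-closed (vectorial closure is not idempotent for arbitrary sets). Moreover, the BeV hypothesis concerns $vcl\bigl(cone(f(\Omega)-f(x_0)+K)\bigr)$, which is not literally the set $vcl(C)$; transferring the condition $\cap(-K)=\{0\}$ from one to the other requires an argument (the two cones differ, a priori, by elements of $K$ that only appear after taking vectorial closure). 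These are repairable, but the $K^{+s}$ step above is not repaired by anything in your outline.
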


Theorems \ref{35} and \ref{36} lead to the following corollary
which completes the relationship between BeV solutions and VH
ones.

\begin{corollary} Suppose that $K$ is solid and vectorially closed. Furthermore, assume that $x_{0}\in
\Omega$ and $vcl\bigg(cone\big(f(\Omega)-f(x_{0})\big)+K\bigg)$ is convex. If $x_{0}$ is a BeV solution, then $x_{0}$ is a VH solution.\end{corollary}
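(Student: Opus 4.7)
The plan is to chain Theorems \ref{35} and \ref{36} directly; the corollary is designed precisely as the composition of these two results, so no new technical machinery should be needed. The hypotheses on $K$ (solid, vectorially closed) and on the convexity of $vcl\bigl(cone(f(\Omega)-f(x_0))+K\bigr)$ are exactly what Theorem \ref{36} requires, and Theorem \ref{35} is assumption-free beyond the setup already fixed in the section.

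Concretely, I would proceed in two steps. First, apply Theorem \ref{36}: since $x_0$ is assumed to be a BeV solution and $K$ is solid and vectorially closed while the set $vcl\bigl(cone(f(\Omega)-f(x_0))+K\bigr)$ is convex, the theorem yields $x_0\in O^s$, i.e., there exists $l\in K^{+s}$ such that $x_0$ is an optimal solution of the scalar problem $\min_{x\in\Omega}\langle l, f(x)\rangle$. Second, feed this conclusion into Theorem \ref{35}: any element of $O^s$ is automatically a VH solution, hence so is $x_0$. Combining these two invocations gives the desired implication.

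There is essentially no obstacle here: the corollary is a bookkeeping statement tying together the two preceding theorems, and the only thing worth checking is that the hypotheses transfer verbatim (which they do, since both theorems are stated in the same framework with the same cone $K$ and feasible set $\Omega$). The one subtlety worth mentioning explicitly in the write-up is that Theorem \ref{36} is cited from \cite{ada-3} and provides the nontrivial half of the argument, while Theorem \ref{35} — itself obtained in this paper via the TVS structure $(X,\tau_c)$ together with Theorems \ref{int} and \ref{int1} — supplies the passage from scalarized optimality to Henig properness. Thus the proof should be stated compactly as: ``By Theorem \ref{36}, $x_0\in O^s$; by Theorem \ref{35}, $x_0$ is a VH solution.''
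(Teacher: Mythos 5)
Your proof is correct and is exactly the paper's intended argument: the corollary is obtained by applying Theorem \ref{36} to get $x_0\in O^s$ and then Theorem \ref{35} to conclude that $x_0$ is a VH solution. No further comment is needed.
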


\begin{proposition} (i) If $x_{0} \in O^{ss}$, then $x_0$ is a VP solution.\vspace{2mm}\\
(ii) If $x_{0} \in \Omega$ is a VP solution, and $f(\Omega)+K$ is convex and relatively solid, then $x_0 \in O^{ss}$. \end{proposition}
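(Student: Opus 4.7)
The plan is to prove the two implications separately.

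For part (i), fix $l\in K^{ss}$ with $x_0\in O_l$ and set $\beta:=\inf_{b\in B}\langle l,b\rangle>0$. I take
$$V:=\{y\in X:|\langle l,y\rangle|<\beta/2\},$$
which is convex, contains $0$, and is algebraically open, being the preimage of an open interval under the linear functional $l$. For every $b\in B$ and $v\in V$ one has $\langle l,b+v\rangle>\beta/2>0$, so $cone(B+V)$ lies in the half-space $\{y:\langle l,y\rangle\geq 0\}$, a proper subset of $X$. If a nonzero $y=f(x)-f(x_0)$ belonged to $-cone(B+V)$, write $-y=t(b+v)$ with $t>0$, $b\in B$, $v\in V$; then $\langle l,f(x_0)\rangle-\langle l,f(x)\rangle>0$, contradicting $x_0\in O_l$. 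Hence $x_0$ is efficient with respect to $cone(B+V)$, and $V$ witnesses that $x_0$ is a VP solution.

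For part (ii), I start from a convex algebraically open set $V\ni 0$ with $cone(B+V)\neq X$ and $x_0$ efficient with respect to $cone(B+V)$, as supplied by the VP property. Replacing $V$ by $V\cap(-V)$ keeps it convex and algebraically open (Lemma \ref{p}), still containing $0$, and makes $V$ symmetric; the cone $C:=cone(B+V)$ only shrinks, and shrinking preserves efficiency. Because $V$ is algebraically open so is $B+V$, hence $B+V\subseteq cor(C)$ and $cor(C)\neq\emptyset$. The condition $C\neq X$ combined with $C$ being a convex cone forces $0\notin cor(C)$. Set $D:=f(\Omega)-f(x_0)+K$, which is convex by the hypothesis on $f(\Omega)+K$. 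Using $K\subseteq C$ and $cor(C)+C=cor(C)$ from Lemma \ref{00}(ii), together with the efficiency condition $(f(\Omega)-f(x_0))\cap(-C)=\{0\}$ and $0\notin-cor(C)$, one verifies $D\cap(-cor(C))=\emptyset$. Applying Theorem \ref{sep1} to the disjoint convex sets $-cor(C)$ (algebraically open) and $D$ produces $l\in X^{'}\setminus\{0\}$ and $\alpha\in\mathbb{R}$ with $\langle l,y\rangle\geq\alpha$ for $y\in D$ and $\langle l,c\rangle>-\alpha$ for $c\in cor(C)$. Scaling $k\in K$ inside $D$ shows $l\in K^{+}$ and $\alpha\leq 0$; scaling $c\in cor(C)$ by $t\to 0^{+}$ in the strict inequality yields $\alpha\geq 0$. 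Hence $\alpha=0$, which gives $\langle l,f(x)\rangle\geq\langle l,f(x_0)\rangle$ for every $x\in\Omega$, i.e.\ $x_0\in O_l$.

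The final step, upgrading $l\in K^{+}$ to $l\in K^{ss}$, I expect to be the main obstacle, since the separation only delivers pointwise strict positivity on $cor(C)$. From $B+V\subseteq cor(C)$ and $\alpha=0$ we have $\langle l,b+v\rangle>0$, that is $\langle l,b\rangle>-\langle l,v\rangle$ for every $b\in B$ and $v\in V$. The symmetry of $V$ upgrades this to $\langle l,b\rangle>|\langle l,v\rangle|$ for every such pair, so
$$\inf_{b\in B}\langle l,b\rangle\;\geq\;\sup_{v\in V}|\langle l,v\rangle|.$$
Since $l\neq 0$ and $V$ is absorbing (as $0\in cor(V)$), some $v_0\in V$ satisfies $\langle l,v_0\rangle\neq 0$, making the right-hand supremum strictly positive. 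Therefore $l\in K^{ss}$ and $x_0\in O^{ss}$, completing the proof.
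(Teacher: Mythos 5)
Your proof is correct, but it takes a genuinely different route from the paper. The paper's proof is a transfer argument: it simply invokes Guerraggio--Luc's Proposition 2.1, which is stated for (locally convex) topological vector spaces, and observes that in $(X,\tau_c)$ the algebraic notions appearing in the definitions of VP and $K^{ss}$ coincide with the topological ones, via Theorems \ref{int} ($cor=int_c$ for convex sets), \ref{int1} ($(X,\tau_c)^*=X^{'}$) and \ref{vcl} ($vcl=cl_c$ for relatively solid convex sets); the relative solidity hypothesis is there precisely so that Theorem \ref{vcl} applies. You instead give a direct, self-contained argument: in (i) you build the required set $V$ as a strip $\{|\langle l,y\rangle|<\beta/2\}$ determined by the scalarizing functional, and in (ii) you symmetrize $V$, show $B+V\subseteq cor(C)$ and $0\notin cor(C)$ for $C=cone(B+V)$, separate $-cor(C)$ from $D=f(\Omega)-f(x_0)+K$ with Theorem \ref{sep1}, pin down $\alpha=0$ by scaling, and then use the symmetry of $V$ together with the strict inequality on $cor(C)$ to get $\inf_B\langle l,b\rangle\geq\sup_V|\langle l,v\rangle|>0$, i.e.\ $l\in K^{ss}$ — this last step is exactly the point the citation hides, and your handling of it is sound. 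What each approach buys: the paper's proof is a one-line illustration of the ``extend from TVS to vector space'' principle that is the theme of the section, while yours is independent of the external reference and in fact slightly sharper, since part (ii) as you prove it uses only convexity of $f(\Omega)+K$ and never the relative solidity assumption (nor $vcl$ at all); the minor bookkeeping points you leave implicit — $K\subseteq C$ via the base property, monotonicity of $cor$, $cor(C)+C=cor(C)$ from Lemma \ref{00}, and preservation of efficiency when the cone shrinks — all check out.
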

\begin{proof} This proposition results from \cite[Proposition 2.1]{luc-mor}
and Theorems \ref{int}, \ref{int1}, and \ref{vcl} in the present
paper.
\end{proof}

Below a diagram is given presenting the proved relationships between (weak) efficient solutions and proper efficient points in
different senses. In this diagram, $\lq\lq$s.a.$"$ means under $\lq\lq$some assumptions" which can be seen from the corresponding theorem. \\
$$
\begin{array}{ccc}
& O^s        &\\
& \Downarrow &\\
O^{ss}\begin{array}{c}\Longrightarrow\vspace{-2mm}\\ \displaystyle\Longleftarrow_{s.a.} \end{array} VP\Longrightarrow& VH&\begin{array}{c}\Longrightarrow\vspace{-2mm}\\ \displaystyle\Longleftarrow_{s.a.} \end{array}BeV \Longrightarrow EFF \Longrightarrow WEFF \\
&\hspace{2mm}\Uparrow \Downarrow_{s.a.}&\\
&\hspace{2mm}HuV&
\end{array}
$$

Now, we contiuou with more efficiency notions. These notions have
been investigated by Borwein and Zhuang \cite{bz} and Guerraggio
and Luc \cite{luc-mor} under normed and topological vector
spaces. We start with extending these definitions from locally
convex TVSs to real vector spaces.

\begin{definition} $x_{0} \in \Omega$ is called\\
(i)~a supper efficient solution, if for each convex algebraic
open set $V$ containing zero, there exists a convex algebraic
open set $U$ containing zero such that
$$vcl\bigg(cone\bigg(f(x_0)-f(\Omega)\bigg)\bigg)\cap
\bigg(K+U\bigg)\subseteq V.$$
(ii)~a strictly efficient solution, if there exists a convex
algebraic open set $V$ containing zero such that
$$vcl\bigg(cone\bigg(f(x_0)-f(\Omega)\bigg)\bigg)\cap \bigg(B+V\bigg)=\emptyset.$$
(iii)~a strongly efficient solution, if for each algebraic
continuous $x^*\in X^{'}$ there are convex algebraic open sets
$U,~V$ containing zero, such that $\langle x^*,.\rangle$ is
bounded on
$$cone\bigg(f(x_0)-f(\Omega)\bigg)\cap \bigg(U+cone(B+V)\bigg).$$
\end{definition}

The following result gives some connections between these different efficiency concepts
and also with the concepts studied so far.

\begin{theorem} Let $B$ be $\tau_c-$bounded, $x_{0} \in \Omega$ and $cone\bigg(f(x_0)-f(\Omega)\bigg)$ be relatively solid and convex. The following assertions are equivalent:\\
(i) $x_0$ is a supper efficient solution;\\
(ii) $x_0$ is a strictly efficient solution;\\
(iii) $x_0$ is a strongly efficient solution;\\
(iv) $x_0$ is a VP solution.
\end{theorem}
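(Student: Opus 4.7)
The plan is to reduce everything to the corresponding equivalence theorem for locally convex TVSs (Borwein--Zhuang \cite{bz} and Guerraggio--Luc \cite{luc-mor}) by lifting the data to $(X,\tau_c)$ and translating every algebraic notion appearing in the four definitions into its topological counterpart. This is exactly the strategy that has been used throughout Section~4.

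First I would recast each of the four properties as its TVS analogue in $(X,\tau_c)$. By Theorem~\ref{111}, $(X,\tau_c)$ is a locally convex Hausdorff TVS, and by Theorem~\ref{int1} its topological dual coincides with $X'$. By Lemma~\ref{basis} together with Theorem~\ref{int}, a convex set $V$ containing the origin satisfies $cor(V)=V$ if and only if $V\in\mathfrak{B}$, i.e.\ $V$ is a convex $\tau_c$-open neighborhood of zero; hence the class of ``convex algebraic open sets containing zero'' appearing in (i)--(iii) is precisely a convex local base at the origin in $\tau_c$. Similarly, Theorem~\ref{vcl} lets us replace $vcl$ by $cl_c$ in the formulas defining super and strict efficiency, since the hypothesis forces $cone(f(x_0)-f(\Omega))$ to be convex and relatively solid. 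Under this translation, (i)--(iv) become the classical super, strict, strong and positive-proper efficiency in the locally convex TVS $(X,\tau_c)$.

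Next I would verify that all hypotheses of the TVS equivalence theorem are in place. The $\tau_c$-boundedness of $B$ is assumed directly. The cone $K$ is pointed because it admits a convex base, and $cone(f(x_0)-f(\Omega))$ is convex and $\tau_c$-relatively solid by Theorem~\ref{icr}, which identifies $icr$ with $ri_c$ on convex sets. With all hypotheses satisfied, the four TVS properties are equivalent by the standard results in \cite{bz} and \cite{luc-mor} (see also \cite{bot}); transporting this equivalence back across the dictionary above yields the equivalence of (i)--(iv).

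The hard part is the careful verification of the dictionary itself, i.e.\ that each clause of each definition lines up with its TVS counterpart under the substitutions $cor\leftrightarrow int_c$, $vcl\leftrightarrow cl_c$, and ``convex algebraic open'' $\leftrightarrow$ ``convex $\tau_c$-open''. In particular, one must check that the nested quantification ``for each convex algebraic open $V$ containing zero there exists a convex algebraic open $U$ containing zero'' in the definition of super efficiency ranges exactly over a convex neighborhood base of $0$ in $\tau_c$ (guaranteed by Lemma~\ref{basis}), and that the functionals considered in the strong-efficiency definition are precisely the $\tau_c$-continuous linear functionals, which by Theorem~\ref{int1} coincide with $X'$. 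Once the dictionary is installed, the proof collapses to invoking the cited TVS result.
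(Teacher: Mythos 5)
Your proposal is correct and follows essentially the same route as the paper: the paper's proof likewise reduces the statement to \cite[Proposition 2.2]{luc-mor} after translating the algebraic notions via $int_c=cor$ (Theorem \ref{int}), $(X,\tau_c)^*=X'$ (Theorem \ref{int1}), and $vcl=cl_c$ (Theorem \ref{vcl}). Your write-up merely makes the translation dictionary more explicit than the paper does.
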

\begin{proof} This theorem results from \cite[Proposition 2.2]{luc-mor} and
Theorems \ref{int}, \ref{int1}, and \ref{vcl} in the present
paper.
\end{proof}

As a result of Theorem \ref{sep3}, the following theorem gives a characterization of weakly efficient solutions in nonconvex vector optimization by means of convex and (or) sublinear functions. The following theorem is, in fact, a direct generalization of \cite[Corollary 3.1]{ger} to vector spaces without topology.

\begin{theorem}
Let $K$ be convex and solid and $x_{0} \in \Omega$. Then\\
1. $ x_0 $ is a weakly efficient solution of (VOP) if and only if there exists a convex onto function $ g: X \longrightarrow \mathbb{R} $ which is strictly $ cor(K)-$monotone and
$$    g(f (x_0) ) =0, \quad g( f( \Omega ) ) \geq 0 , \quad g ( int_c f( \Omega ) )  >  0,                  $$
$$  g( f(x_0)  -  K  )  \leq  0, \quad  g( f(x_0)  - cor(K) )   <  0, \quad  g( f(x_0)  -  \delta K ) = 0.          $$
If $ f(x_0) = 0,$ then $g$ can be chosen such that it is subadditive. \\

2. $ x_0 $ is a weakly efficient solution of (VOP) if and only if there exists a sublinear onto function $ g: X \longrightarrow \mathbb{R}$ which is strictly $ cor(K)-$monotone and
$$   g( f( \Omega )  - f(x_0)  )  \geq  0, \quad g( int_c f( \Omega )  - f(x_0)  )  >  0 , \quad g(-K) < 0,    $$
$$   g( - \delta K ) = 0, \quad g( - cor (K) )   <  0 , \quad   g(K) \geq 0, \quad  g(cor K )   >  0.       $$
\end{theorem}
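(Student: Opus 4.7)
The plan is to specialize Theorem \ref{sep3} to the data $C := -cor(K)$ and $A := f(\Omega) - f(x_0)$, since $x_0$ is weakly efficient precisely when $A \cap C = \emptyset$. All displayed conditions in both parts will then follow from parts (a), (c), (d) of Theorem \ref{sep3}, after a shift by $f(x_0)$ when necessary.

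First I would verify the three bulleted hypotheses of Theorem \ref{sep3} for this choice of $C$. Properness follows from $K \neq X$; convexity and algebraic openness of $-cor(K)$ follow from Lemma \ref{p}, using $cor(cor(K)) = cor(K)$. Fixing any $k_0 \in cor(K)$ and setting $k := k_0$, Lemma \ref{00}(ii) gives $cor(K) + \alpha k_0 \subseteq cor(K)$ for $\alpha > 0$, hence $vcl(C) - \alpha k_0 \subseteq vcl(C)$. For the covering $X = \bigcup_\alpha (vcl(C) + \alpha k_0)$, the $\tau_c$-continuity of scalar multiplication (Theorem \ref{111}) shows that for every $x \in X$ and all sufficiently large $\alpha > 0$ one has $k_0 - x/\alpha \in cor(K)$, so $x - \alpha k_0 \in -cor(K) \subseteq vcl(C)$.

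Theorem \ref{sep3}(a) then supplies a convex $\tau_c$-continuous onto function $g_0$ with $g_0(A)\geq 0$, $g_0(int_c A) > 0$, $g_0(C) < 0$, $g_0(\delta C) = 0$. Setting $g(x) := g_0(x - f(x_0))$ and using $-\delta K = \delta C$, $-cor(K) = C$, and $-K \subseteq vcl(C)$ — the last via $K \subseteq cl_c(K) = cl_c(cor(K)) = vcl(cor(K))$ from Theorem \ref{vcl} together with the standard TVS identity $cl(K) = cl(int\, K)$ for convex solid $K$ — recovers the six formulas of part 1; in particular $g(f(x_0)) = g_0(0) = 0$ since $0 \in \delta C$ (the sequence $-k_0/n$ lies in $C$, while $0 \notin cor(K)$ because $K \neq X$). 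Strict $cor(K)$-monotonicity comes from Theorem \ref{sep3}(c) with $B := cor(K)$, whose hypothesis $\delta C - (cor(K) \setminus \{0\}) \subseteq C$ reduces to the standard TVS identity $cl_c(K) + cor(K) \subseteq cor(K)$. When $f(x_0) = 0$, subadditivity follows from Theorem \ref{sep3}(d): the condition $\delta C + \delta C \subseteq vcl(C)$ is immediate because $vcl(C) = -vcl(K)$ is a convex cone, hence closed under addition.

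Part 2 follows by the same reduction, exploiting that $C$ is essentially a cone ($-cor(K) \cup \{0\}$ is a convex cone by Lemma \ref{00}(i)): inspecting the construction underlying Theorem \ref{sep3} (the Gerth--Weidner--Tammer functional $g_0(y) = \inf\{t \in \mathbb{R} : y \in t k_0 - vcl(C)\}$), one sees that when $vcl(C)$ is a cone this functional is positively homogeneous, and combined with the subadditivity from part (d) it becomes sublinear. The displayed inequalities in part 2 then fall out of the same sign analysis as in part 1, together with $g_0(0) = 0$ and strict $cor(K)$-monotonicity. The main obstacle I anticipate is the bookkeeping: correctly matching the six sets $K, -K, cor(K), -cor(K), \delta K, -\delta K$ to the appropriate pieces of $vcl(C), C, \delta C$ under the shift by $f(x_0)$, with the most delicate step being $-K \subseteq vcl(C)$, which hinges on Theorem \ref{vcl} together with the $\tau_c$-density of $cor(K)$ in $K$.
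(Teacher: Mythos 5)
Your overall strategy is sound and is, at bottom, the same idea the paper uses: reduce everything to the Gerth--Weidner nonconvex separation machinery applied in the TVS $(X,\tau_c)$, with Theorems \ref{int} and \ref{vcl} translating $int_c$ and $cl_c$ into $cor$ and $vcl$. The difference is in how much you re-derive: the paper's proof is a one-line citation of \cite[Corollary 3.1]{ger} (the specialization of Gerth--Weidner to weak efficiency with respect to a solid convex cone), whereas you reconstruct that corollary from the paper's own Theorem \ref{sep3} with $C:=-cor(K)$ and $A:=f(\Omega)-f(x_0)$. For Part 1 your reconstruction is essentially complete and correct: the verification of the three hypotheses of Theorem \ref{sep3} (properness from $0\notin cor(K)$, algebraic openness of $-cor(K)$ via Lemma \ref{p}, the recession condition via Lemma \ref{00}(ii), the covering condition from $cor(cor(K))=cor(K)$), the identification $0\in\delta C$, the inclusion $-K\subseteq vcl(C)$ via $K\subseteq cl_c(cor(K))=vcl(cor(K))$, and the monotonicity hypothesis $\delta C-cor(K)\subseteq C$ reduced to $cl_c(K)+cor(K)\subseteq cor(K)$ are all legitimate steps, and they make the argument more self-contained than the paper's.

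The genuine gap is Part 2. Theorem \ref{sep3}, as stated, only delivers a convex, $\tau_c$-continuous, onto $g$, with subadditivity under (d); it says nothing about positive homogeneity, so sublinearity cannot be extracted from its statement. Your fix --- ``inspect the construction underlying Theorem \ref{sep3}'' and observe that the Gerth--Weidner functional $g_0(y)=\inf\{t:\ y\in tk_0-vcl(K)\}$ is positively homogeneous when the ordering set is a cone --- is the right fact, but it is asserted rather than proved and it steps outside the toolbox of results you are allowed to quote; this is exactly the content of \cite[Corollary 3.1]{ger}, which the paper cites instead. In addition, the Part 2 conditions $g(K)\geq 0$ and $g(cor\,K)>0$ are not ``the same sign analysis as in Part 1'': they live on the $K$-side rather than the $C$-side, and require showing $cor(K)\cap(-vcl(K))=\emptyset$ (e.g.\ via $cor(K)+cl_c(K)\subseteq cor(K)$ and $0\notin cor(K)$), an argument absent from your sketch. (As an aside, the condition $g(-K)<0$ cannot hold literally as printed, since $0\in K\cap\delta K$ and a positively homogeneous finite $g$ has $g(0)=0$; it must be read as $g(-K)\leq 0$, i.e.\ as in Part 1 --- but that is an issue with the statement, not with your argument.) So: Part 1 stands as a valid alternative derivation; Part 2 needs either the explicit homogeneity computation for the scalarizing functional or, as the paper does, a direct appeal to \cite[Corollary 3.1]{ger}.
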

\begin{proof} The desired results are obtained by \cite[Corollary 3.1]{ger},
because $(X,\tau_c)$ is a TVS, $int_c(K)=cor(K),$ and
$cl_c(K)=vcl(K).$\end{proof}

\begin{remark}
The results of this section are some selected important issues which can be easily extended from TVSs to real vector
spaces by the main results of the present paper. Such extension can
be done for many other results in Optimization and Convex Analysis.
\end{remark}


\end{document}